\documentclass{amsart}

\numberwithin{equation}{section}

\usepackage{amssymb}
\usepackage{enumerate, xspace}
\usepackage{dsfont}
\usepackage{changebar}
\usepackage[colorlinks]{hyperref}

\hfuzz=15pt

\newtheorem{thmm}{Theorem}
\newtheorem{thm}{Theorem}[section]
\newtheorem{lem}[thm]{Lemma}
\newtheorem{cor}[thm]{Corollary}

\newtheorem{conv}[thm]{Convention}

\newtheorem{rem}[thm]{Remark}


\newcommand\cB{{\mathcal B}}
\newcommand\cC{{\mathcal C}}

\newcommand\cF{{\mathcal F}}

\newcommand\cL{{\mathcal L}}

\newcommand\cN{{\mathcal N}}
\newcommand\cO{{\mathcal O}}

\newcommand\cQ{{\mathcal Q}}
\newcommand\cR{{\mathcal R}}


\newcommand\bE{{\mathbb E}}

\newcommand\bL{{\mathbb L}}

\newcommand\bN{{\mathbb N}}

\newcommand\bP{{\mathbb P}}

\newcommand\bR{{\mathbb R}}

\newcommand\bT{{\mathbb T}}

\newcommand\bZ{{\mathbb Z}}

\newcommand\ve{\varepsilon}

\newcommand\vf{\varphi}
\newcommand\Var{\Sigma^2}

\newcommand\btau{{\boldsymbol{\tau}}}

\newcommand\Id{{\mathds{1}}}

\newcommand{\bfP}{{\bf P}}
\setlength{\unitlength}{1mm}

\newcommand{\Leb}{{\mathfrak m}}
\newcommand{\mfa}{{\mathfrak A}}

\begin{document}

\title[Quenched CLT]{Quenched CLT for random toral automorphism}

\author{Arvind Ayyer}
\address{Arvind Ayyer \\
Department of Physics \\
Rutgers University\\
136 Frelinghuysen Road\\
Piscataway, NJ 08854, USA.}
\email{ayyer@physics.rutgers.edu}

\author{Carlangelo Liverani}
\address{Carlangelo Liverani\\
Dipartimento di Matematica\\
II Universit\`{a} di Roma (Tor Vergata)\\
Via della Ricerca Scientifica, 00133 Roma, Italy.}
\email{{\tt liverani@mat.uniroma2.it}}

\author{Mikko Stenlund}
\address{Mikko Stenlund\\
Department of Mathematics \\
Rutgers University \\
110 Frelinghuysen Road \\
Piscataway, NJ 08854, USA.}
\email{mstenlun@math.rutgers.edu}

\date{\today}
\begin{abstract}
We establish a quenched Central Limit Theorem (CLT) for a smooth
observable of random sequences of iterated linear hyperbolic maps on the torus.
To this end we also obtain an annealed CLT for the same
system. We show that, almost surely, the variance of the quenched
system is the same as for the annealed system. Our technique is the
study of the transfer operator on an anisotropic Banach space
specifically tailored to use the cone condition satisfied by the maps.

\end{abstract}
\keywords{Central Limit Theorem, iterated maps, transfer operator}
\subjclass[2000]{60F05, 37D20, 82C41, 82D30 }
\maketitle

\section{introduction}
The issue of limit laws in dynamical systems has been widely explored in
the last decades and it has a clear relevance for physical applications. A
prime example of a physically relevant system is the study of the
statistical behavior of a Lorenz gas with randomly distributed obstacles.
The case of periodic obstacles is known to be ergodic. 
This follows from the recurrence \cite{Sim}, which in turns
follows from the CLT, \cite{Co, Sm}, which has been proved in \cite{bsc} (see also \cite{DSV} for more refined results on these issues).
See \cite{Le} for more details and for the treatment of some (locally)
aperiodic cases.
On the contrary the random case (albeit one may na\"{\i}vely think of it as an easier
case) stands as a challenge.

If one considers the simplest possibility (the random position of the obstacles
is a small i.i.d.\@ perturbation of a periodic configuration) then, by
Poincar\'e section, one is readily reduced to considering a random sequence of
hyperbolic symplectic maps. Yet, such a sequence of maps is not i.i.d.\@
due to the presence of recollisions. Recollisions are notoriously a source of serious problems in the study of gases but, quite surprisingly, even disregarding the recollision problem (i.e. for the i.i.d.\@ case), the problem is poorly understood.

In this paper we address the easiest setting in which such a situation
occurs:  an i.i.d.\@ sequence of smooth uniformly hyperbolic symplectic maps. To make the
presentation as clear as possible we will steer away from the full
generality in which the present results can be obtained (although we will comment on it) and
we will consider an i.i.d.\@ sequence of linear  two dimensional toral
automorphisms. Exponential decay of correlations has been shown in this setting in \cite{AS}.

For such a model we will show that the time-$N$ average of any smooth
zero mean observable has Gaussian fluctuations of order $\sqrt N$ for
almost every sequence of maps. Moreover, we identify the variance of such
Gaussian fluctuations.

Similar, but less complete, results are obtained in \cite{Ba} where the
Gaussian nature of the fluctuations is established for each sequence of maps but
neither the amplitude nor the variance is investigated.

The paper is self-contained  and organized as following: Section \ref{sec:results} contains
the precise description of the model we will discuss and states the main
results of the paper. Such results depend on the understanding of the
ergodic properties of sequences of maps. These are investigated in Section
\ref{sec:gap} where the needed ergodic properties are related to the
spectral properties of transfer operators viewed on appropriate Banach
spaces in the spirit of the line of research started with \cite{BKL}.
Next, in Section \ref{sec:average}, we use the above results to establish
a CLT averaged over the environment for a class of systems larger than the
ones at hand but necessary to handle the quenched case. The latter is
dealt with in Section \ref{sec:quenched} using an approach inspired by 
works on random walks in random environments; see \cite{DKL} and references
therein.

\begin{conv}
\label{conv:contants}
In this paper we will use $C$ to designate a generic constant
depending only on the choice of the maps $\{T_i\}$ below. We will use instead $C_{a,b,c,\dots}$ for constants depending
also on the parameters $a,b,c,\dots$. Consequently, the actual
numerical value of such constants may vary from one occurrence to the
next. On the contrary we will use $C_1, C_2, \dots$, to designate
constants whose value is fixed throughout the paper.
\end{conv}

\subsection*{Acknowledgments}
We would like to thank Joel Lebowitz for posing the problem and
Dmitry Dolgopyat for communicating to one of us (CL) reference \cite{Ba}.
MS would like to thank the Finnish Cultural Foundation for funding. MS
and AA were supported in part by NSF DMR-01-279-26 and AFOSR AF
49620-01-1-0154. CL would like to thank the Courant Institute where
he was visiting when this work started.

\section{The model and the results}\label{sec:results}
Let us consider two\footnote{In fact, the following would hold almost
verbatim also for any larger collection of matrices.}  matrices
$\{A_i\}_{i=0}^{1}\in SL(2,\bN)$  and define the toral automorphisms
$T_ix=A_ix\mod 1$. Let $\wp\in [0,1]$ and set $p_0=\wp$, $p_1=1-\wp$. We
can then introduce the Markov operator $Q_\wp:L^\infty(\bT^2,\bR)\to L^\infty(\bT^2,\bR)$ defined by 
\[
Q_\wp g(x)=\sum_{i=0}^{1}p_ig(T_i(x)).
\]
Such an operator defines a Markov Process. To describe it we consider
the space of trajectories $\Omega_*:=(\bT^2)^{\bN}$ endowed with the
product topology and, letting $(x_0,x_i,\dots)$ be a general element
in $\Omega_*$, we have the obvious dynamics $\btau:\Omega_*\to\Omega_*$ defined
by $\btau(x_0,x_1,\dots)=(x_1,\dots)$. For each initial measure
$\mu$ on $\bT^2$, the above Markov process defines a Borel probability measure
$P_\mu$ on $\Omega_*$. Let $\bE_{P_\mu}$ be the expectation with respect
to such a measure. Then
\[
\begin{split}
&\bE_{P_\mu}(g(x_0))=\int g(\xi)\mu(d\xi)\\
&\bE_{P_\mu}(g(x_{i+1})\;|\; x_i)=Q_\wp g(x_i).
\end{split}
\]
The measure $P_\mu$ is supported on a very small set of
trajectories: $P_\mu$-almost surely
$x_{i+1}\in \{T_0x_i,T_{1}x_i\}$. Thus, if we consider
$\Sigma:=\{0,1\}^{\bN\setminus\{0\}}$, we have
\[
P_\mu(\cup_{\omega\in\Sigma}\{(x_0, T_{\omega_1}x_0,
T_{\omega_2}T_{\omega_1}x_0, \dots)\})=1.
\]
In other words we can define the probability space $\Omega=\Sigma\times
\bT^2$ (again equipped with the product topology), the map $F:\Omega\to\Omega$ defined by
\[
F(\omega, x)=(\tau\omega, T_{\omega_1}x),
\]
where $(\tau\omega)_i=\omega_{i+1}$, and the measure ${\bf
P}_{\wp,\mu}=\bP_{\wp}\times \mu$, where $\bP_{\wp}$ is the Bernoulli
measure with probability $\wp$ of having zero. We will denote by
$\bE_{\wp}$ the expectation with respect to $\bP_\wp$. 
Note that if $\mu$ is simultaneously $T_0$ and $T_1$ invariant,
then ${\bf P}_{\wp,\mu}$ is invariant for the map $F$. Since the maps are
symplectic, this happens for the normalized Lebesgue measure $\Leb$. Let us set ${\bf P}_\wp:={\bf P}_{\wp,\Leb}$ and call $\bE_{{\bf P}_{\wp}}$ the corresponding expectation. 
Finally, we define the map $\Psi:\Omega\to\Omega_*$ by
\[
\Psi(\omega, x):=(x, T_{\omega_1}x, T_{\omega_2}T_{\omega_1}x, \dots).
\]
It is then easy to verify that
$\btau^k(\Psi(\omega,x))=\Psi(F^k(\omega,x))$ for all
$(\omega,x)\in\Omega$, $k\in\bN$,
and $\bE_{P_\mu}(h)=\bE_{{\bf P}_{\wp,\mu}}(h\circ
\Psi)$ for each continuous function $h:\Omega_*\to \bR$,\footnote{Indeed, if $h(x_0,x_1,\dots)=g(x_n)$, then
$\bE_{P_\mu}(h)=\mu(Q_\wp^n g)=\bE_{{\bf P}_{\wp,\mu}}(h\circ \Psi)$. On
the other hand if we have already the equality for functions depending
on $n$ variables, we can write $h(x_0,\dots,x_n,x_{n+1})=g_{x_0,\dots,
x_n}(x_{n+1})$ and, by induction,
\[
\begin{split}
\bE_{P_\mu}(h)&=\bE_{P_\mu}(\bE_{P_\mu}(h\;|\; x_1,\dots, x_n))=\bE_{P_\mu}(Q_\wp g_{x_1,\dots,
x_n}(x_n))\\
&=\bE_{{\bf P}_{\wp,\mu}}\left(\sum_i p_i
h(\Psi(\omega,x)_0, \dots, \Psi(\omega, x)_n, T_i\Psi(\omega,
x)_n)\right)\\
&=\bE_{{\bf P}_{\wp,\mu}}(\bE_{{\bf P}_{\wp,\mu}}(h\circ \Psi\;|\; x,
\omega_1, \dots,\omega_n))=\bE_{{\bf P}_{\wp,\mu}}(h\circ \Psi).
\end{split}
\]
The assertion follows then by the density of the local functions among the
continuous ones.}
that is the two Dynamical Systems $(\Omega_*,\btau, P_\Leb)$ and
$(\Omega, F, {\bf P}_\wp)$  are isomorphic and so are the $\sigma$-algebras
$\cF_k=\sigma\text{-}\{x,\omega_1,\dots,\omega_k\}$ and
$\tilde\cF_k=\sigma\text{-}\{x_0,\dots,x_k\}$. We will use the two processes above
interchangeably as far as the study of measure theoretical properties is
concerned.

For each function 
$f\in\cC^\infty(\bT^2,\bR)$, $f\not\equiv 0$, such that $\Leb(f)=0$ we can then define the random
variables $X_k(\omega,x):=f(\pi\circ F^k(\omega,x))$, where $\pi(\omega,x):= x$. We are interested in studying the $\bP_\wp$-almost sure asymptotic behavior, as $N\to\infty$, of the random variables
\[
S_N(\omega):=\sum_{k=0}^{N-1}X_k(\omega,\cdot).
\]

The first relevant fact lies in the following Lemma.
\begin{lem}\label{lem:ergodic}
The dynamical system  $(\Omega, F, {\bf P_\wp})$ is ergodic.
\end{lem}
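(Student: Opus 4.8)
The plan is to prove ergodicity of the skew product $(\Omega, F, \bP_\wp)$ by combining the Bernoulli (hence mixing, hence ergodic) nature of the base $(\Sigma, \tau, \bP_\wp)$ with a fiberwise mixing statement for the toral dynamics. Concretely, I would argue that it suffices to show that $\bP_\wp$-a.e.\ sequence $\omega$ drives the torus toward equidistribution: for every $g \in L^2(\bT^2,\Leb)$ and Lebesgue-a.e.\ $x$, the averages of $g$ along the orbit $x, T_{\omega_1}x, T_{\omega_2}T_{\omega_1}x,\dots$ converge to $\Leb(g)$. The natural tool is Fourier analysis on $\bT^2$: since each $A_i \in SL(2,\bN)$ is hyperbolic, its transpose $A_i^{\mathsf T}$ acts on the dual lattice $\bZ^2\setminus\{0\}$ with no periodic points, and expanding/contracting directions that are irrational slopes, so that iterated products $A_{\omega_1}^{\mathsf T}\cdots A_{\omega_n}^{\mathsf T}$ push any nonzero frequency off to infinity.

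The key steps, in order, would be: (i) Reduce ergodicity of $F$ to showing that every $F$-invariant $\Phi \in L^2(\Omega, \bP_\wp)$ is constant. Expand $\Phi(\omega,x) = \sum_{k \in \bZ^2} c_k(\omega) e^{2\pi i \langle k, x\rangle}$ in Fourier modes in the fiber variable, with $c_k \in L^2(\Sigma,\bP_\wp)$. (ii) Write out the invariance equation $\Phi \circ F = \Phi$: since $T_{\omega_1}x = A_{\omega_1}x$, one gets $e^{2\pi i\langle k, A_{\omega_1}x\rangle} = e^{2\pi i \langle A_{\omega_1}^{\mathsf T} k, x\rangle}$, hence for each $k$,
\[
c_k(\tau\omega) = c_{A_{\omega_1}^{\mathsf T} k}(\omega) \quad\text{for } \bP_\wp\text{-a.e.\ }\omega.
\]
(iii) Take $L^2$ norms: $\|c_k\|_{L^2}$ is constant along the (deterministic in index) orbit $k \mapsto A_i^{\mathsf T}k$. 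But for $k \neq 0$, hyperbolicity of the $A_i^{\mathsf T}$ (all with the same unstable/stable cone structure, since the $A_i$ share cone conditions — this is exactly the setup invoked for \cite{AS}) forces $\|A_{\omega_n}^{\mathsf T}\cdots A_{\omega_1}^{\mathsf T}k\| \to \infty$ for every $\omega$, so the indices $A_{\omega_1}^{\mathsf T}k$ visited are eventually all distinct and escape to infinity; since $\sum_k \|c_k\|_{L^2}^2 < \infty$, we must have $c_k = 0$ for all $k \neq 0$. (iv) Then $\Phi(\omega,x) = c_0(\omega)$ depends only on $\omega$ and is $\tau$-invariant; by ergodicity (mixing) of the Bernoulli shift $(\Sigma,\tau,\bP_\wp)$, $c_0$ is constant. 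Hence $\Phi$ is constant and $F$ is ergodic.

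The main obstacle I anticipate is step (iii): making precise that $A_{\omega_n}^{\mathsf T}\cdots A_{\omega_1}^{\mathsf T}k$ escapes to infinity for \emph{every} (not merely a.e.)\ sequence $\omega$ and does not return arbitrarily near $k$ through cancellation between the two matrices. The clean way is to exploit a common invariant cone: each $A_i^{\mathsf T}$ maps a fixed closed cone $\cC_u$ strictly inside itself and expands vectors there by a factor $\ge \lambda > 1$ (uniform in $i$), and likewise for the stable cone under inverses. A nonzero integer vector $k$ lands in $\cC_u$ after one step of \emph{any} single map, or else lies in the complementary cone where the inverses expand — either way some product of length at most one or two places it in a cone where all further products grow monotonically, so the orbit of $k$ under $\{A_0^{\mathsf T}, A_1^{\mathsf T}\}$-words is infinite. (Strictly one should treat separately the thin set of $k$ on cone boundaries, but the shared hyperbolic structure rules out any $k$ being fixed or bounded.) If one prefers to avoid cone bookkeeping, an alternative is to quote directly from \cite{AS} the exponential decay of correlations for the annealed operator $Q_\wp$ acting on mean-zero smooth functions; since trigonometric polynomials are dense, $\bE_\wp$-typical realizations cannot support a nonconstant invariant function, giving step (iii) for free. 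Either route closes the argument; the cone argument is the more self-contained one and matches the paper's stated emphasis on the cone condition.
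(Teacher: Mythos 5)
Your approach is genuinely different from the paper's: the paper reduces ergodicity of $F$ to ergodicity of the Markov operator $Q_\wp$ and then invokes the decay of correlations from Corollary~\ref{cor:trivial} to force any bounded $Q_\wp$-invariant function to be constant, whereas you work directly in $L^2(\Omega,\bP_\wp)$ with a fiberwise Fourier expansion, bypassing the anisotropic Banach space machinery entirely.

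There is, however, a genuine error in step~(iii). The assertion that $\|c_k\|_{L^2}$ is constant along the forward orbit $k\mapsto A_i^T k$ does not follow from $c_k(\tau\omega)=c_{A_{\omega_1}^T k}(\omega)$: restricting that identity to $\{\omega_1=i\}$ and pushing forward by $\tau$ gives only $\int_{\{\omega_1=i\}}|c_{A_i^T k}|^2\,d\bP_\wp=p_i\|c_k\|_{L^2}^2$, i.e.\ control over one cylinder piece of $c_{A_i^T k}$, not its full norm; and squaring and integrating the invariance equation over all of $\Sigma$ merely returns $\|c_k\|=\|c_k\|$. The relation that actually closes goes \emph{backward}: substituting $k\to(A_{\omega_1}^T)^{-1}k$ gives $c_k(\omega)=c_{(A_{\omega_1}^T)^{-1}k}(\tau\omega)$ and hence the harmonic identity
\[
\|c_k\|_{L^2}^2=\wp\,\|c_{(A_0^T)^{-1}k}\|_{L^2}^2+(1-\wp)\,\|c_{(A_1^T)^{-1}k}\|_{L^2}^2.
\]
Iterating, $\|c_k\|^2\le\max_{|w|=n}\|c_{m_w}\|^2$ where $m_w:=(A_{w_n}^T)^{-1}\cdots(A_{w_1}^T)^{-1}k$. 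To finish one needs $\min_{|w|=n}|m_w|\to\infty$: if some $m_w\in\cC_+$, then writing $k=A_{w_1}^T\cdots A_{w_n}^T m_w$ and using \eqref{eq:expansion} together with $|m_w|\ge1$ gives $|k|\ge C_0^{-1}\lambda^n$, impossible once $n$ exceeds some $n_0$ depending only on $|k|$; so for $n\ge n_0$ every $m_w$ lies in $\cC_-$, and \eqref{eq:contraction} applied to the subsequent backward steps yields $|m_w|\ge C_0^{-1}\lambda^{n-n_0}$. Combined with $\sum_m\|c_m\|^2=\|\Phi\|^2<\infty$, this forces $c_k=0$ for all $k\neq0$, and your step~(iv) is sound. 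So the strategy is viable, but the constancy claim must be replaced by the backward harmonic identity and the escape argument must be run on the backward, rather than forward, images of $k$.
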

\begin{proof}
By the above discussion the ergodicity of $(\Omega, F, {\bf P_\wp})$ is
equivalent to the ergodicity of the stationary Markov process $P_\Leb$.
It is well known that the ergodicity of such a process is equivalent to
the fact that $Q_\wp g=g$ implies $g=\text{constant}$ for each bounded measurable $g$.
In section \ref{sec:gap} we will see (Corollary \ref{cor:trivial})
that there exists $p,q >0$ such that,
for each $f\in\cC^{p+2d+1}, g\in\cC^{q}$, holds 
\begin{equation}\label{eq:mixing}
\lim_{n\to\infty}\Leb(fQ_\wp^ng)=\Leb(f)\Leb(g).
\end{equation}
Taking $f\in\cC^{p+2d+1}$ and $g\in  L^\infty$, we can choose a sequence
$(g_j)_{j=1}^\infty\subset\cC^{q}$ that converges to $g$ in $L^1$ and we obtain
\eqref{eq:mixing} also for such functions. But this means that $Q_\wp g=g$
implies $\Leb(f g)=\Leb(f)\Leb(g)$ for each $f\in\cC^{p+2d+1}$ which readily
implies that $g$ is constant.
\end{proof}
Since $\Leb(S_N(\omega))=0$, thanks to the previous Lemma, we can apply the
Birkhoff Ergodic Theorem and obtain
\[
\lim_{N\to\infty}\frac 1N S_N=0 \quad {\bf P_\wp}-\text{almost surely}.
\]
The next step is to investigate the variable $N^{-\frac 12}S_N$ and prove
that it satisfies a (quenched) CLT.  
\begin{thmm}\label{thm:clt-quenched}
For each $f\in\cC^\infty(\bT^2,\bR)$,\footnote{In the proof we use $f\in \cC^r(\bT^2,\bR)$ for $r$ large enough. Yet, since our bounds for $r$ are far from optimal (nor do we strive to optimize them) we see no point in giving an explicit bound for $r$.} $f\not\equiv 0$, and for each $\wp\in[0,1]$ there exists $\Var_\wp\in\bR_+$ such that,  for
$\bP_\wp$-almost all sequences $\omega$ holds\footnote{By $\cN\bigl(0,
\Var\bigr)$ we mean the centered Gaussian random variable with variance
$\Var$. The symbol $\Rightarrow$ stands for convergence in
distribution. As usual, $\cN(0,0)$ stands for the measure concentrated
at zero.}
\[
N^{-\frac 12}S_N\Rightarrow
\cN\bigl(0, \Var_\wp\bigr)\quad under\  \Leb.
\]
In fact, $\Var_\wp$ depends analytically on $\wp$. Moreover, if $f$ is not a simultaneous continuous coboundary\footnote{By a {\em simultaneous continuos coboundary} for a set of maps $\{T_i\}$ we mean that there exists a continuous function $g$ such that $f=g-g\circ T_i$, for each map $\{T_i\}$.} for each admissible\footnote{The map $T_i$ is admissible if it appears with nonzero probability with respect to $\bP_\wp$. In our case the admissible maps are $\{T_0,T_1\}$ unless $\wp\in\{0,1\}$.} map $T_i$, then $\Sigma_\wp^2>0$.
\end{thmm}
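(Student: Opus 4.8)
The plan is a four‑step argument; the decisive point is the decorrelation estimate of Step~3.

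\smallskip\noindent\textbf{Step 1 (annealed CLT, the variance, and its analyticity).}
For real $s$ near $0$ put $\cL_{\wp,s}g:=\cL_\wp\bigl(e^{\imath s f}g\bigr)$, where $\cL_\wp:=\sum_{i=0}^{1}p_i\,\cL_i$ and $\cL_i$ is the transfer operator of $T_i$ with respect to $\Leb$, acting on the anisotropic Banach space $\cB$ of Section~\ref{sec:gap}. Passing to the $\Leb$‑adjoint of the twisted Markov operator, the i.i.d.\ structure yields the exact identity $\bE_{{\bf P}_\wp}\bigl(e^{\imath s S_N}\bigr)=\Leb\bigl(\cL_{\wp,s}^{N}\Id\bigr)$. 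Since $\cL_{\wp,0}=\cL_\wp$ is the $\Leb$‑adjoint of $Q_\wp$ and has a spectral gap on $\cB$ (Section~\ref{sec:gap}), the Nagaev--Guivarc'h perturbation method produces, for $|s|$ small, a simple leading eigenvalue $\lambda_\wp(s)$, jointly analytic in $(\wp,s)$, with $\lambda_\wp(0)=1$ and $\lambda_\wp'(0)=\imath\,\Leb(f)=0$; set $\Var_\wp:=-\lambda_\wp''(0)=\Leb(f^2)+2\sum_{m\ge1}\Leb\bigl(f\,Q_\wp^m f\bigr)\ge0$, the series converging by the gap. This gives $\bE_{{\bf P}_\wp}\bigl(e^{\imath t N^{-1/2}S_N}\bigr)\to e^{-\Var_\wp t^2/2}$ with error $O(N^{-1/2})$ for each fixed $t$ --- the annealed CLT, established in Section~\ref{sec:average}, in fact for the slightly enlarged class of systems needed below. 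As $\cL_{\wp,s}$ is affine in $\wp$ and the gap is uniform on compact subsets of $[0,1]$, $\lambda_\wp(s)$, hence $\Var_\wp$, is analytic in $\wp$. Finally, every $T_{\omega_k}\cdots T_{\omega_1}$ preserves $\Leb$, so $\int S_N(\omega,x)\,\Leb(dx)=N\Leb(f)=0$ for \emph{every} $\omega$; consequently $\Var_\wp$ is at the same time the $\omega$‑average of the quenched second moments $N^{-1}\!\int S_N(\omega,x)^2\,\Leb(dx)$, which is why the quenched limit below carries the annealed variance.

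\smallskip\noindent\textbf{Step 2 (reduction of the quenched CLT).}
Fix $t\in\bR$ and put $\phi_N(\omega):=\Leb\bigl(e^{\imath t N^{-1/2}S_N(\omega,\cdot)}\bigr)$. By tightness of $\{N^{-1/2}S_N(\omega,\cdot)\}_N$ under $\Leb$ (uniform in $\omega$, by the quenched decay of correlations), it suffices to show $\phi_N(\omega)\to e^{-\Var_\wp t^2/2}$ for $\bP_\wp$‑a.e.\ $\omega$ and every $t$ in a countable dense set. Along the subsequence $N_n:=\lfloor n^a\rfloor$ with $1<a<2$, Chebyshev and Borel--Cantelli reduce this to the summability of $\bE_\wp\bigl|\phi_{N_n}-e^{-\Var_\wp t^2/2}\bigr|^2$; the annealed rate handles the bias, $\bigl|\bE_\wp\phi_N-e^{-\Var_\wp t^2/2}\bigr|=O(N^{-1/2})$, so everything reduces to the fluctuation bound
\[
\bE_\wp\bigl|\phi_N-\bE_\wp\phi_N\bigr|^2=O(N^{-\rho})\qquad\text{for some }\rho>\tfrac12 .
\]
For the gaps $N_n\le N<N_{n+1}$ one has $N^{-1/2}|S_N-S_{N_n}|\le\|f\|_\infty N^{-1/2}(N-N_n)=O(n^{a/2-1})\to0$ and $|N^{-1/2}-N_n^{-1/2}|\,|S_{N_n}|\le\bigl(N_n^{-1/2}|S_{N_n}|\bigr)\,(N-N_n)/N_n\to0$ in $\Leb$‑probability, so $N^{-1/2}S_N$ and $N_n^{-1/2}S_{N_n}$ share the limiting law.

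\smallskip\noindent\textbf{Step 3 (the fluctuation bound --- the main obstacle).}
With $s=tN^{-1/2}$,
\[
\bE_\wp|\phi_N|^2=\bE_\wp\int\!\!\!\int e^{\imath s\bigl(S_N(\omega,x)-S_N(\omega,y)\bigr)}\,\Leb(dx)\,\Leb(dy);
\]
split the window $[0,N)=[0,K)\cup[K,N)$ with $K=\lfloor N^{\theta}\rfloor$, $\tfrac12<\theta<1$, using $S_N(\omega,\cdot)=S_K(\omega,\cdot)+S_{N-K}\bigl(\tau^{K}\omega,\,T_{\omega_K}\cdots T_{\omega_1}\,\cdot\,\bigr)$. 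Conditioning on $\cF_K$, the map $(x,y)\mapsto\bigl(T_{\omega_K}\cdots T_{\omega_1}x,\;T_{\omega_K}\cdots T_{\omega_1}y\bigr)$ preserves $\Leb\otimes\Leb$ while $\omega_{K+1},\dots$ are independent of $\cF_K$, so the second block contributes through frozen products $\cL_{\omega_N,s}\cdots\cL_{\omega_{K+1},s}$ of one‑step twisted operators $\cL_{i,s}g:=\cL_i(e^{\imath s f}g)$, which by the \emph{uniform} spectral gap for such products (the enlarged class of Section~\ref{sec:average}) reduce to $\Leb(\cdot)$ up to an error exponentially small in $N-K$; the first block carries a phase $e^{\imath s S_K}$ with $|sS_K|\le|t|KN^{-1/2}=O(N^{\theta-1/2})$, and its $\cF_K$‑fluctuation, after using $\Leb(f)=0$ and the gap once more, is a negative power of $N$ upon optimising $\theta$. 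Collecting terms, $\bE_\wp|\phi_N|^2$ matches $|\bE_\wp\phi_N|^2$ to order $N^{-\rho}$ with $\rho>\tfrac12$. The hard part is the bookkeeping of all this in the anisotropic norms: one must control products and compositions $f\circ T_{\omega_j}\cdots T_{\omega_1}$, where composing with the hyperbolic, cone‑preserving $T_i$ buys regularity along the stable direction at the expense of the unstable one, so that the smoothness $f\in\cC^r$ with $r$ large is consumed precisely here.

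\smallskip\noindent\textbf{Step 4 (non‑degeneracy).}
Suppose $\Var_\wp=0$; we show $f$ is a simultaneous continuous coboundary. Take first $\wp\in(0,1)$. The mixing of the random system (Section~\ref{sec:gap}; cf.\ \cite{AS}) furnishes $u\in L^2(\bT^2)$ with $(I-Q_\wp)u=f$, and a short computation turns the Green--Kubo formula into $\Var_\wp=\|u\|_{L^2}^2-\|Q_\wp u\|_{L^2}^2$. Since $Q_\wp u=p_0\,u\circ T_0+p_1\,u\circ T_1$ with $\|u\circ T_i\|_{L^2}=\|u\|_{L^2}$, vanishing of $\Var_\wp$ together with strict convexity of the $L^2$‑norm forces $u\circ T_0=u\circ T_1$ $\Leb$‑a.e., hence $f=u-u\circ T_i$ for $i=0,1$; the case $\wp\in\{0,1\}$ (a single admissible map) is the classical statement that $f$ is a coboundary for it. Finally, the classical regularity theory for the cohomological equation of a hyperbolic toral automorphism --- in Fourier variables $\widehat u(\xi)=\sum_{k\ge0}\widehat f\bigl((A_0^{\top})^{-k}\xi\bigr)$, a rapidly convergent sum since $f$ is smooth and $(A_0^{\top})^{-k}\xi$ escapes to infinity exponentially --- shows $u$ is smooth, in particular continuous. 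Thus $f$ would be a simultaneous continuous coboundary for every admissible $T_i$, contrary to hypothesis; therefore $\Var_\wp>0$, completing the proof.
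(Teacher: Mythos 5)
Your outline follows the same broad strategy as the paper (annealed CLT via Nagaev--Guivarc'h, then $L^2$ fluctuation bound on characteristic functions, then Borel--Cantelli, then non-degeneracy), but the two places where the real work happens --- Steps 3 and 4 --- both have genuine gaps, and Step 2's arithmetic does not close without the stronger Step 3 you assert.

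\textbf{Step 3 is the crux and it is not established.} You assert a fluctuation bound $\bE_\wp|\phi_N-\bE_\wp\phi_N|^2=O(N^{-\rho})$ with $\rho>\tfrac12$, but your windowing argument relies on a ``uniform spectral gap for products $\cL_{\omega_N,s}\cdots\cL_{\omega_{K+1},s}$ of one-step twisted operators'' which is nowhere proved (it is a statement about \emph{compositions of different operators}, not powers of one, and does not follow from the gap for the averaged $\cL^{(d)}_\wp$), and the phase in the first block $|sS_K|=O(N^{\theta-1/2})$ is \emph{growing}, not small, so the assertion that ``its $\cF_K$-fluctuation \dots\ is a negative power of $N$'' is not a proof. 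What the paper actually does is the clean identity
$\bE_\wp|\phi_N|^2=\Leb_2\bigl([\cL^{(2)}_{i\lambda N^{-1/2}f_2,\wp}]^N 1\bigr)$
with $f_2(x,y)=f(x)-f(y)$ on $\bT^4$, i.e.\ the $L^2(\bP_\wp)$ second moment of the quenched characteristic function \emph{is} the annealed characteristic function of the doubled system. This is precisely why Section~\ref{sec:gap} is written for $d=2$ and why Lemma~\ref{lem:eigenvalue} exists. The paper then obtains only $\bE_\wp|\phi_N-e^{-\lambda^2\Var_\wp/2}|^2=O((1+|\lambda|^3)/\sqrt N)$, which is \emph{not} summable along your $N_n=\lfloor n^a\rfloor$, $1<a<2$: you would need $\sum n^{-a/2}<\infty$, i.e.\ $a>2$, incompatible with your gap control requiring $a<2$. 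The paper resolves this tension without improving $\rho$, by covering dyadic blocks $J_k=\{2^k\le N\le 2^{k+1},|\lambda|\le k\}$ with a sub-dyadic grid $B_k$ and using the averaged large-deviation estimate of Lemma~\ref{lem:averaged-LDE} to control the variation across a grid cell. Your polynomial subsequence is a cleaner idea \emph{if} one first sharpens the fluctuation bound (one can in fact get $O(N^{-1})$ by observing that the doubled system is odd under $x\leftrightarrow y$, so its eigenvalue is even in $\nu$, and that the $N^{-1/2}$ correction to $\bE_\wp\phi_N$ is purely imaginary), but that sharpening must be argued, and your windowing argument does not supply it.

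\textbf{Step 4 has a gap at the very first move.} You write ``mixing furnishes $u\in L^2$ with $(I-Q_\wp)u=f$,'' but mixing only gives $u:=\sum_{n\ge0}Q_\wp^n f$ in the anisotropic space $\cB^{p,q}$ (a space of distributions, not contained in $L^2$). Whether $u$ lands in $L^2$ is exactly the issue: for $\wp\in\{0,1\}$ (a single hyperbolic automorphism) and generic $f$, there is no measurable solution at all, which is what Livschitz theory is about. The paper \emph{derives} the $L^2$ function $g$ from the hypothesis $\Var_\wp=0$: vanishing variance forces $\sup_N\bE_{{\bf P}_\wp}\bigl[(\sum_{k<N}X_k)^2\bigr]<\infty$, one extracts a weak-$*$ cluster point $Y$, and sets $g=\bE_\wp(Y)\in L^2(\bT^2)$; only then does one get $f=g-Q_\wp g$. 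Your identity $\Var_\wp=\|u\|_{L^2}^2-\|Q_\wp u\|_{L^2}^2$ and the strict-convexity step are a nice alternative to the paper's martingale argument once $u\in L^2$ is in hand, but you must justify $u\in L^2$ first. Also, the Fourier argument for continuity of $u$ is too optimistic: $(A_0^{\top})^{-k}\xi$ does \emph{not} escape to infinity at a rate uniform in $\xi$ --- for $\xi$ nearly aligned with the contracting direction of $(A_0^{\top})^{-1}$, the orbit first approaches $\bZ^2$-small magnitudes for $\approx\log|\xi|$ steps before expanding --- and the naive $\ell^1$ estimate for $\hat u$ diverges. Continuity needs the actual Livschitz regularity argument (the paper sketches a direct proof in a footnote by controlling the stable and unstable derivatives separately).

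Minor point: the appeal to tightness ``by quenched decay of correlations'' in Step 2 is superfluous; pointwise convergence of characteristic functions on a dense countable set to a function continuous at $0$ already yields convergence in distribution by Lévy.
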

\begin{rem} \label{livsic} Imposing $\Sigma_\wp^2>0$ clearly excludes fewer observables in the random case than the deterministic one.
From Theorem~\ref{thm:clt-quenched} and the classical Livschitz Theorem \cite{KH} easily follows
that $\Sigma^2_\wp=0$ if and only if $\sum_{j=0}^{k-1} f(x_j)=0$ 
whenever $(x_j)_{j=0}^{k-1}$ is a closed orbit for \emph{some}
sequence of admissible maps $T_{\omega_1},\dots,T_{\omega_{k}}$. Unfortunately,
to use such a criterion it may be necessary to check a very large
number of trajectories. Yet, if $\wp\in{]0,1[}$, then the situation may be much
simpler. Indeed, if $\Sigma^2_\wp=0$, then it must be $f=g-g\circ T_0=g-g\circ T_1$ and hence $g\circ T_1 \circ T_0^{-1}=g$. Thus if $T_1
\circ T_0^{-1}$ is ergodic,\footnote{Note that this may easily fail
even if $T_0\neq T_1$. Indeed, consider the case
\[
A_0=\begin{pmatrix}
1&1\\2&3
\end{pmatrix}\quad \quad A_1=\begin{pmatrix}
1&1\\1&2
\end{pmatrix}
\]
Then $T_1(T_0^{-1}(x_1,x_2))=(x_1,x_2-x_1)$. In fact, all the functions
$g(x_1,x_2)=\tilde g(x_1)$ are invariant. The identity $f=g-g\circ
T_0=g-g\circ T_1$ yields $\sum_{k=0}^{N-1} f\circ\pi\circ
F^k(\omega,x)=g(x)-g\circ T_i\circ T_{\omega_{N}}\circ\dots\circ T_{\omega_1}(x)$ for $i\in\{0,1\}$ and hence $\Sigma^2_\wp=0$ if
$g$ is in $L^2$.} 
then $g$ must be constant and hence $f\equiv 0$ contrary to assumptions. That is: if $\{T_0,T_1\}$ are admissible and $T_1\circ T_0^{-1}$ is ergodic, then $\Var_\wp>0$.
\end{rem}
\begin{rem}
Note that one cannot possibly extend our results to include all sequences; for instance, there exist sequences containing alternating, ``deterministic", stretches of either $T_0$'s or $T_1$'s. If these stretches are of rapidly and ever increasing length, then the variance fails to exist.
\end{rem}
Before proving such a strong result we will obtain its averaged (annealed) version.
\begin{lem}\label{lem:clt-averaged}
For each $\wp\in[0,1]$ there exists $\Var_\wp\in\bR_+$ such that it holds
\[
N^{-\frac 12}S_N\Rightarrow
\cN\bigl(0, \Var_\wp\bigr)\quad under\  \bfP_\wp.
\]
\end{lem}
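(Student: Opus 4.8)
The plan is to prove the annealed CLT via the standard transfer-operator/perturbation method, using the spectral gap for the averaged operator $Q_\wp$ that will be established in Section \ref{sec:gap}. First I would introduce the twisted transfer operator: for $t$ in a complex neighbourhood of $0$, define $Q_{\wp,t}g := Q_\wp(e^{itf}g)$ acting on the anisotropic Banach space of Section \ref{sec:gap}. Since $e^{itf}$ is a smooth multiplier and $t\mapsto e^{itf}$ is analytic with values in a suitable function space, $t\mapsto Q_{\wp,t}$ is an analytic family of bounded operators with $Q_{\wp,0}=Q_\wp$. By the spectral gap for $Q_\wp$ (the content of the forthcoming Corollary \ref{cor:trivial} and the accompanying spectral statement), $1$ is a simple isolated eigenvalue of $Q_\wp$ with the rest of the spectrum inside a disc of radius $\theta<1$; standard analytic perturbation theory then gives, for $|t|$ small, a simple isolated eigenvalue $\lambda_\wp(t)$ depending analytically on $t$ with $\lambda_\wp(0)=1$, together with analytic spectral projections $\Pi_{\wp,t}$, and a decomposition $Q_{\wp,t}^N = \lambda_\wp(t)^N \Pi_{\wp,t} + R_{\wp,t}^N$ where $\|R_{\wp,t}^N\|\le C\theta'^N$ uniformly for $|t|$ small, with $\theta<\theta'<1$.

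Next I would compute the characteristic function of $S_N$ under $\bfP_\wp$ in terms of this decomposition. Because $X_k(\omega,x)=f(\pi\circ F^k(\omega,x))$ and $\bfP_\wp=\bP_\wp\times\Leb$ with $\Leb$ simultaneously invariant, one has the identity
\[
\bE_{\bfP_\wp}\!\left(e^{itS_N}\right)=\Leb\!\left(Q_{\wp,t}^N\Id\right),
\]
which follows by iterating the defining relations $\bE_{P_\Leb}(g(x_{k+1})\mid x_k)=Q_\wp g(x_k)$ exactly as in the footnote computation already given in Section \ref{sec:results}. Substituting the spectral decomposition gives
\[
\bE_{\bfP_\wp}\!\left(e^{itS_N}\right)=\lambda_\wp(t)^N\,\Leb\!\left(\Pi_{\wp,t}\Id\right)+\Leb\!\left(R_{\wp,t}^N\Id\right),
\]
and the second term is $O(\theta'^N)\to 0$ while $\Leb(\Pi_{\wp,t}\Id)\to\Leb(\Pi_{\wp,0}\Id)=\Leb(\Id)=1$ as $t\to0$. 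Rescaling $t\mapsto t/\sqrt N$ reduces everything to the asymptotics of $\lambda_\wp(t/\sqrt N)^N$.

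It then remains to expand $\lambda_\wp(t)=1+ic_1 t-\tfrac12 c_2 t^2+o(t^2)$ and identify the coefficients. Differentiating the eigenvalue relation and using $\Leb(Q_{\wp,t}^N\Id)\big|_{t=0}$-derivatives, one gets $c_1 = \lim_N N^{-1}\bE_{\bfP_\wp}(S_N)=\Leb(f)=0$ and $c_2=\lim_N N^{-1}\bE_{\bfP_\wp}(S_N^2)=:\Sigma_\wp^2\ge 0$; concretely $\Sigma_\wp^2 = \Leb(f^2)+2\sum_{k\ge1}\Leb\!\left(f\,Q_\wp^k f\right)$, the series converging by the spectral gap, and one checks $\Sigma_\wp^2\in\bR_+$ (it is real and nonnegative, being a limit of variances). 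Hence $\lambda_\wp(t/\sqrt N)^N\to e^{-\Sigma_\wp^2 t^2/2}$, so $\bE_{\bfP_\wp}(e^{itS_N/\sqrt N})\to e^{-\Sigma_\wp^2 t^2/2}$, and L\'evy's continuity theorem gives $N^{-1/2}S_N\Rightarrow\cN(0,\Sigma_\wp^2)$ under $\bfP_\wp$ (with the convention $\cN(0,0)=\delta_0$ covering the degenerate case).

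The main obstacle is not in this argument per se but in ensuring the spectral-gap input of Section \ref{sec:gap} is strong enough: one needs the perturbation $g\mapsto Q_\wp(e^{itf}g)$ to be analytic and to preserve the anisotropic Banach space with uniformly bounded norm for small complex $t$, and one needs the quasi-compactness/Lasota–Yorke estimates to be robust under this multiplicative perturbation. This is where the choice of the Banach space — tailored to the common cone condition of the $T_i$, so that $f\in\cC^r$ acts as a bounded multiplier — does the real work; granting the results of Section \ref{sec:gap}, the CLT above is then a routine application of the Nagaev–Guivarc'h method.
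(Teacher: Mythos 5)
Your proposal is the same Nagaev--Guivarc'h argument the paper uses, and the structure (analytic perturbation of a quasi-compact operator, identification of the second derivative of the leading eigenvalue with the variance, rescale $t\mapsto t/\sqrt N$, L\'evy continuity) is exactly what appears in Section~\ref{sec:average}. Two technical points should be fixed, though neither is fatal. First, you twist the Markov operator $Q_\wp$ and claim a spectral gap for it ``on the anisotropic Banach space of Section~\ref{sec:gap}''; but Section~\ref{sec:gap} develops the Lasota--Yorke estimates and quasi-compactness for the \emph{transfer} operator $\cL_\wp$ on $\cB^{p,q}$, and $Q_\wp$ does \emph{not} act boundedly on that space --- the norm $\|\cdot\|_{p,q}$ is built on the stable cone $\cC_-$, which is adapted to $T_i^{-1}$, not $T_i$; the operator $Q_\wp$ would require the cone $\cC_+$ (or the dual space). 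The paper sidesteps this by twisting $\cL_\wp$ itself, writing $\bE_{{\bf P}_\wp}(e^{i\lambda N^{-1/2}\sum_k f_{\omega,k}})=\Leb(\cL_{i\lambda N^{-1/2}f,\wp}^N 1)$ and doing perturbation theory for $\cL_{g,\wp}$ on $\cB^{p,q}$; the duality $\Leb(fQ_\wp g)=\Leb(\cL_\wp f\cdot g)$ lets you pass between the two formulations, but the operator you actually perturb on $\cB^{p,q}$ has to be $\cL$. Second, with your definition $Q_{\wp,t}g=Q_\wp(e^{itf}g)$ one gets $\Leb(Q_{\wp,t}^N\Id)=\bE_{{\bf P}_\wp}\bigl(e^{it\sum_{k=1}^{N}f_{\omega,k}}\bigr)$, not $\bE_{{\bf P}_\wp}\bigl(e^{it\sum_{k=0}^{N-1}f_{\omega,k}}\bigr)$; the two differ by a single boundary term, which vanishes after the $t/\sqrt N$ rescaling, but the stated identity is off by one index (using $e^{itf}Q_\wp g$ instead would make it exact). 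With these corrections your argument matches the paper's, including the variance formula $\Sigma_\wp^2=\Leb(f^2)+2\sum_{n\ge1}\Leb(f\cL_\wp^n f)$, which is the same as your $\Leb(f^2)+2\sum_{k\ge1}\Leb(fQ_\wp^kf)$ by the duality above.
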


In turn such a result is based on a fine understanding of the dynamical
properties of certain transfer operators associated to the process. 

\begin{rem} 
Note that one could obtain similar results for any finite collection of smooth symplectic hyperbolic maps in any dimension or piecewise smooth maps in dimension two. This can be achieved at the price of using in the following section the functional setting of \cite{GL, BaT} or \cite{DL} for the piecewise smooth case.
\end{rem}
Our first task will be to obtain some information on the spectral
properties of such operators. To do so in a useful way it is necessary to
introduce appropriate functional spaces. Instead of appealing to the
general theory developed in \cite{BKL, Ba1, FR, BaT, GL} we will take
advantage of the simplicity of the present setting and introduce
explicitly a particularly simple version of such a theory. We will then see
how it can be used to address the ergodic theoretical questions we are
interested in.

\section{Spectral properties of the Transfer operators} \label{sec:gap}

For further use (see section \ref{sec:quenched}) we need to
study more general automorphisms than the one introduced in the previous
section, namely $\bigoplus_{1}^d T_i\;:\;\bT^{2d}\to\bT^{2d}$ defined by
$\bigoplus_{1}^d T_ix=\bigoplus_{1}^d A_i\mod 1$ where $\bigoplus_{1}^d
A_i$
is the $d$-fold direct sums of the matrices $A_0,A_1\in SL(2,\bZ)$, that
is
\[
\bigoplus_{1}^d A_i =
\begin{pmatrix}
A_i &            & \\
   & \ddots & \\
   &            & A_i \\
\end{pmatrix}
\in SL(2d,\bZ),
\]
with $d\in\{1,2\}$. These matrices are symplectic with respect to the
symplectic form
\[
J_{2d} =
\begin{pmatrix}
J_2 &            & \\
   & \ddots & \\
   &            & J_2 \\
\end{pmatrix},
\]
where $J_2 = \bigl( \begin{smallmatrix} 0 & 1 \\ -1 & 0\\
\end{smallmatrix} \bigr)$ is the standard symplectic form in two
dimensions.\footnote{In fact all the following can be easily generalized
to any set of symplectic toral automorphisms which preserves the standard
sector in any dimension, see \cite{LW1, LW2} for more details on the
necessary machinery.}

Let us introduce the transfer operators $\cL_{T_i}^{(d)} \varphi := \varphi\circ (\bigoplus_1^d T_i)^{-1}$ induced by the above automorphisms. We will consider the operator obtained from
the latter by averaging over the Bernoulli measure, namely 
\[
\cL^{(d)}_{\wp}:=\wp \,\cL^{(d)}_{T_0}+(1-\wp) \cL^{(d)}_{T_1}.
\]
We also need to study perturbed operators of the form
\[
\cL^{(d)}_{g,\wp} \varphi:=\cL^{(d)}_{\wp} (e^g \varphi).
\]
In order to avoid unnecessary proliferation of indices, we set
\begin{equation}\label{eq:transfer1D}
\cL_{T_i}:=\cL^{(1)}_{T_i}, \quad \cL_{\wp}:=\cL^{(1)}_{\wp}, \quad\text{and}\quad \cL_{g,\wp} :=\cL^{(1)}_{g,\wp}.
\end{equation}
Finally, notice that the transfer operator $\cL_{\wp}$ and the Markov operator $Q_\wp$ are dual:
\begin{equation}\label{eq:dual}
\Leb(f Q_\wp g) = \Leb(\cL_\wp f \cdot g).
\end{equation}

To study such operators it is necessary to introduce appropriate
Banach spaces (see \cite{GL,BaT,FR}). Here, given the simplicity of
the situation, we can quickly introduce and use spaces inspired by
\cite{BaT,FR} whereby making the presentation self-consistent.\footnote{Actually our
choice is more flexible than the one in \cite{FR}, in the spirit of
\cite{BaT}, and would allow to treat $\cC^k$ maps, although it is not the goal here.}

Given $v:=(v_1,\dots,v_{2d})\in\bR^{2d}$, let us denote
$\check{v}:=(v_1,v_3,\dots,v_{2d-1})\in\bR^d$ and
$\hat{v}:=(v_2,v_4,\dots,v_{2d})$. Then the standard inner product in
$\bR^d$ of the latter two reads $\langle\check{v}, \hat{v}\rangle =
\sum_{k=0}^{d-1} v_{2k+1} v_{2(k+1)}$.
Consider now the cones $\cC_-:=\{v\in\bR^{2d}\;:\; \langle\check{v}, 
\hat{v}\rangle \leq
0\}$, $\cC_+:=\{v\in\bR^{2d}\;:\; \langle\check{v},\hat{v}\rangle \geq
0\}$. Then
there exists $1<\lambda\leq\Lambda$ and $C_0$, depending only on
$\{A_0,A_1\}$,
such that, for all $v\in\cC_-$, $(i_1,\dots,i_n)\in \{0,1\}^n$, and
$n\in\bN$,
\begin{equation}\label{eq:contraction}
C_0^{-1}\lambda^n\leq 
\frac{\|(\bigoplus_1^d A_{i_1}\cdots \bigoplus_1^dA_{i_n})^{-1} v\|}{\|v\|},
\frac{\|(\bigoplus_1^d A_{i_1}^T \cdots \bigoplus_1^d A_{i_n}^T)^{-1} v\|}{\|v\|}
\leq C_0\Lambda^n;
\end{equation}
and, for all $v\in\cC_+$, $(i_1,\dots,i_n)\in \{0,1\}^n$, and $n\in\bN$,
\begin{equation}\label{eq:expansion}
C_0^{-1}\lambda^n\leq 
\frac{\|(\bigoplus_1^d A_{i_1}\cdots \bigoplus_1^dA_{i_n})v\|}{\|v\|},
\frac{\|(\bigoplus_1^d A_{i_1}^T \cdots \bigoplus_1^d A_{i_n}^T) v\|}{\|v\|}
\leq C_0\Lambda^n.
\end{equation}

Moreover one can compute that there exists $\beta>1$ such that
$(\bigoplus_1^d A_i^{-1})\cC_-\subset \cC_\beta$, where
$\cC_\beta:=\{v\in\bR^{2d}\;:\; \beta^{-1}\|\check{v}\|^2\leq -\langle
\check{v},\hat v\rangle\leq \beta\|\check{v}\|^2\}\subset \text{int
}\cC_-$.

Now, we proceed to define the norm for the Banach space we want to
consider. Notice that the natural objects in these cones are not vectors
but Lagrangian subspaces. Recall that, given a symplectic form $J$, a
Lagrangian subspace $E\subset \bR^{2d}$ is a $d$-dimensional subspace such
that $\langle v, J w\rangle = 0$ for all $v,w \in E$.

For our choice of symplectic form, every Lagrangian subspace can also
be written as the set $E=\{ v\;:\; \hat{v} = -U \check{v}\}$ for a specific
symmetric $d\times d$ matrix $U$. Our convention is to write a minus sign in front of the $U$ here, because then $E\subset \cC_\beta$ if and only if $\beta^{-1}\Id \leq U \leq \beta \Id$.

Let us denote the set of Lagrangian subspaces as $\bL$.
For a Lagrangian subspace $E$ and a vector $k$, we set $\langle
E,k\rangle:=\displaystyle{\sup_{\substack{v\in E\\ \|v\|=1}}}\langle
v,k\rangle$.
Then, for each $p,q\in\bR_+$ and $f\in\cC^\infty(\bR^{2d},\bR)$ we
define the norm
\[
\|f\|_{p,q}:=\sup_{\substack{E \in \bL\\ E \subset
\cC_-}}\sum_{k\in\bZ^{2d}\backslash
\{0\}}|f_k|\frac{|k|^p}{1+|\langle E,k\rangle|^{p+q}}+|f_0|,
\]
where $f_k$ are the Fourier coefficients of $f$.

Notice that the $n$th power of $\cL^{(d)}_\wp$ can be expanded
\[
[\cL^{(d)}_\wp]^n = \sum_{j=1}^n\sum_{i_j=0}^1
\wp^{\delta_{i_1,0}+\dots+\delta_{i_n,0}}
(1-\wp)^{\delta_{i_1,1}+\dots+\delta_{i_n,1}} \, \cL^{(d)}_{T_{i_n}} \dots \cL^{(d)}_{T_{i_1}}.
\]
Because the Bernoulli weights above sum to unity,
\[
\|[\cL^{(d)}_\wp]^n f\|_{p,q} \leq \sup_{(i_1,\dots,i_n)\in \{0,1\}^n}
\|\cL^{(d)}_{T_{i_n}} \dots \cL^{(d)}_{T_{i_1}} f\|_{p,q}.
\]

A straightforward computation shows that $(\cL^{(d)}_{T_i}
f)_k=f_{(\bigoplus_1^d A_i^T) \,k}$ for $k\in \bigoplus_1^d\bZ^2 \cong
\bZ^{2d}$.
Hence, for each $n\in\bN$ and $(i_1,\dots,i_n)\in \{0,1\}^n$,
\[
\| \cL^{(d)}_{T_{i_n}} \dots \cL^{(d)}_{T_{i_1}} f  \|_{p,q}=\sup_{\substack{E \in \bL\\ E
\subset \cC_-}}\sum_{k\in\bZ^{2d}\backslash
\{0\}}|f_k|\frac{|(\bigoplus_1^d A_{i_1}^T\dots \bigoplus_1^d
A_{i_n}^T)^{-1}k |^p}{1+ |\langle E,(\bigoplus_1^d
A_{i_1}^T\dots \bigoplus_1^d A_{i_n}^T)^{-1}k  \rangle
|^{p+q}}+|f_0|.
\]

We begin estimating the summand. Before that, we simplify notation and
rename the matrix product. To avoid the problem of too many indices, we
denote it only with the subscript $n$ and assume the dimension and the
sequence to be implicit:
\[
\mfa_n := \bigoplus_1^dA_{i_1}^T\cdots \bigoplus_1^dA_{i_n}^T.
\] 

Using \eqref{eq:contraction} and the fact $E \subset \cC_-$, we have
\begin{equation}\label{eq:angle0}
C_0\Lambda^n \langle  E_n,k \rangle\geq\langle E,\mfa_n^{-1} k \rangle
\geq C_0^{-1} \lambda^n \langle E_n,k \rangle \quad \text{ where } \quad
E_n:=(\mfa_n^{-1})^T E
\end{equation}
and consequently,
\[
|f_k|\frac{|\mfa_n^{-1}k |^p}{1+ |\langle E,\mfa_n^{-1}k  \rangle
|^{p+q}} \leq  C_0^{p+q} |f_k|\frac{|\mfa_n^{-1}k |^p}{1+ \lambda^{n(p+q)}
|\langle E_n,k  \rangle
|^{p+q}}.
\]

Now there are two possible cases depending on where $\mfa_{n}^{-1}k$ lies.
If $\mfa_{n-1}^{-1}k \notin \cC_-$, then $|\mfa_n^{-1} k| \leq C_0
\Lambda\lambda^{-n+1} |k|$ and we have
\[
\frac{|\mfa_n^{-1}k |^p}{1+ \lambda^{n(p+q)} |\langle E_n,k  \rangle
|^{p+q}} \leq C_0^p(\Lambda\lambda^{-1})^p \lambda^{-np} \frac{|k|^p}{1+
|\langle E_n,k  \rangle |^{p+q}}.
\]

While if $\mfa_{n-1}^{-1}k \in \cC_-$, then
\begin{equation}\label{eq:angle}
\langle E_n, k \rangle\geq
\frac{\lambda \Lambda^{-2n+1}|k|}{2 C_0^3 \beta\sqrt{1+\beta^2}}=:\frac{|k|}{B_{\beta,n}}.
\end{equation}
Indeed, setting $k_n:= \mfa_{n-1}^{-1}k$,
\[
\begin{split}
\langle   \bigl(\bigoplus_1^dA_{i_n}^{-1}\bigr) E,  k_n \rangle&\geq 
\inf_{\substack{E\in\bL\\E\subset \cC_\beta}}\langle E, k_n \rangle
=\inf_{\beta^{-1}\Id\leq U\leq \beta\Id}\
\sup_{\check{v}\in\bR^d}\frac{\langle \check{v},\check{k}_n\rangle-\langle
\check{v},U\hat k_n\rangle}{\sqrt{|\check{v}|^2+|U\check{v}|^2}}\\
&\geq \inf_{\beta^{-1}\Id\leq U\leq \beta\Id}\frac{-\langle
\check{k}_n,\hat k_n\rangle+\langle \hat k_n,U\hat k_n\rangle}{\sqrt{|\hat
k_n|^2+|U\hat k_n|^2}}\\
&\geq \frac{|\hat k_n|}{\beta\sqrt{1+\beta^2}},
\end{split}
\]
where we have chosen $\check{v}=-\hat k_n$ in the second line. On the
other hand the choice $\check{v}=U^{-1}\check{k}_n$ yields
\[
\langle  \bigl(\bigoplus_1^dA_{i_n}^{-1}\bigr) E,  k_n \rangle\geq 
\inf_{\substack{E\in\bL\\E\subset \cC_\beta}}\langle E, k_n \rangle\geq 
\frac{|\check{k}_n|}{\beta\sqrt{1+\beta^2}},
\]
The inequality \eqref{eq:angle} follows from the above estimates,  \eqref{eq:angle0} and $C_0\Lambda^{n-1} |k_n|\geq |k|$.

Next, we consider two subcases. If $|k| \geq B_{\beta,n}$, then $|\langle
E_n,k \rangle |\geq 1$. Hence,
\[
\frac{|\mfa_n^{-1}k |^p}{1+ \lambda^{n(p+q)} |\langle E_n,k  \rangle
|^{p+q}} \leq C_0^p \frac{\Lambda^{pn}\lambda^{-(p+q)n} |k |^p}{\bigl
|\bigl\langle E_n,k \bigr \rangle \bigr |^{p+q}}
\leq 2C_0^p \frac{\Lambda^{pn}\lambda^{-(p+q)n} |k |^p}{1+\bigl
|\bigl\langle E_n,k \bigr \rangle \bigr |^{p+q}},
\]
which is a good estimate provided $q$ is large enough so that
$\Lambda^{p}\lambda^{-(p+q)} < 1$. The remainder is a finite sum which can
be estimated because if $E \subset \cC_-$, then $E_n \subset
(\mfa_n^{-1})^T \cC_- \subset \cC_-$. Thus,  we have
\[
\sup_{\substack{E \in \bL\\ E \subset \cC_-}}
\sum_{\substack{k\in\bZ^{2d}\backslash
\{0\}\\ |k|<B_{\beta,n}}}\frac{ |f_k|\,|\mfa_n^{-1}k |^p}{1+
\lambda^{n(p+q)} |\langle E_n,k  \rangle |^{p+q}} \leq
C_0^p \Lambda^{np} \sup_{\substack{E \in \bL\\ E \subset \cC_-}}
\sum_{\substack{k\in\bZ^{2d}\backslash
\{0\}\\ |k| < B_{\beta,n}}} \frac{|f_k|\,|k |^p}{1+ |\langle E,k  \rangle
|^{p+q}}.
\]
Accordingly, setting $\tilde \mu:=
\max\{\lambda^{-p},\Lambda^p \lambda^{-p-q}\}$ we can
collect all the above inequalities as
\begin{equation}
\label{eq:lasota-0}
\begin{split}
\|[\cL^{(d)}_\wp]^n f\|_{p,q}&\leq C_1\|f\|_{p,q},\\
\|[\cL^{(d)}_\wp]^n f\|_{p,q}&\leq 2C_0^{q+2p}\tilde \mu^n\|f\|_{p,q}+
 \sup_{\substack{E \in \bL\\ E \subset \cC_-}}
\sum_{\substack{k\in\bZ^{2d}\backslash
\{0\}\\ |k| < B_{\beta,n}}} \frac{C_0^{q+2p} \Lambda^{np} |f_k|\, |k
|^p}{1+ |\langle E,k  \rangle |^{p+q}} +|f_0|\\
&\leq C_2\tilde \mu^n\|f\|_{p,q}+B_n\|f\|_{p-1,q+1},
\end{split}
\end{equation}
where $B_n= C_0^{q+2p}B_{\beta,n} \Lambda^{np}$. Next, for each $\mu\in
(\tilde
\mu,1)$ choose $n_0$ such that $C_2\tilde \mu^{n_0}\leq \mu^{n_0}$
and, for
each $n\in\bN$, write $n=kn_0+m$ with $m\in\{0,\dots,n_0-1\}$. One can
thus iterate the second of the \eqref{eq:lasota-0} and obtain
\[
\|[\cL^{(d)}_\wp]^n f\|_{p,q}\leq
\mu^{kn_0}\|\cL^mf\|_{p,q}+B_{n_0}\sum_{j=0}^{k-1}\mu^{jn_0}\|\cL^{(k-1-j)n_0+m}f\|_{p-1,q+1},
\]
which finally yields
\begin{equation}
\label{eq:lasota-1}
\begin{split}
\|[\cL^{(d)}_\wp]^n f\|_{p,q}&\leq C_1\|f\|_{p,q},\\
\|[\cL^{(d)}_\wp]^n f\|_{p,q}&\leq C_3\mu^n\|f\|_{p,q}+B\|f\|_{p-1,q+1},
\end{split}
\end{equation}
with $B=C_2B_{n_0}(1-\mu^{n_0})^{-1}$.
We can then consider the closure, $\cB^{p,q}$, of $\cC^\infty$ in the
space of
distributions with respect to the norms $\|\cdot\|_{p,q}$. It is easy to
prove the following:

\begin{lem}
\label{lem:compactness}
The operators $\cL^{(d)}_\wp$ are well defined bounded operators on
$\cB^{p,q}$, provided $\Lambda^p<\lambda^{p+q}$. In addition, the unit
ball of $\cB^{p,q}$ is relatively compact in $\cB^{p-1,q+1}$.
\end{lem}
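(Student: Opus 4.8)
The plan is to prove the two assertions of Lemma~\ref{lem:compactness} separately, and in both cases the real work has essentially already been done in the estimates \eqref{eq:lasota-0}--\eqref{eq:lasota-1}; what remains is to package them correctly on the completed space $\cB^{p,q}$. For boundedness, note first that a single transfer operator acts very explicitly on Fourier coefficients, $(\cL^{(d)}_{T_i}f)_k=f_{(\bigoplus_1^dA_i^T)k}$, so $\cL^{(d)}_\wp$ is well defined on $\cC^\infty$; the inequality $\|[\cL^{(d)}_\wp]^nf\|_{p,q}\le C_1\|f\|_{p,q}$ from the first line of \eqref{eq:lasota-1}, specialized to $n=1$, gives a uniform bound $\|\cL^{(d)}_\wp f\|_{p,q}\le C_1\|f\|_{p,q}$ on the dense subspace $\cC^\infty$. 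Since $\cB^{p,q}$ is by definition the completion of $\cC^\infty$ under $\|\cdot\|_{p,q}$, the operator extends uniquely to a bounded operator on all of $\cB^{p,q}$ with the same norm bound. The hypothesis $\Lambda^p<\lambda^{p+q}$ is exactly what makes $\tilde\mu=\max\{\lambda^{-p},\Lambda^p\lambda^{-p-q}\}<1$, which is what was used to derive \eqref{eq:lasota-0}; without it the finite-sum remainder term in \eqref{eq:lasota-0} need not be controlled, so one records the hypothesis here.

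For the compactness statement, I would argue directly from the definition of the two norms. Fix $f$ in the unit ball of $\cB^{p,q}$, so $\|f\|_{p,q}\le 1$. The key observation is that for a fixed cutoff $R>0$, truncating $f$ to its low-frequency part $f^{\le R}:=\sum_{0<|k|\le R}f_ke^{2\pi i\langle k,\cdot\rangle}+f_0$ controls the error in the weaker norm: for $|k|>R$ one has
\[
|f_k|\frac{|k|^{p-1}}{1+|\langle E,k\rangle|^{p+q}}
=|f_k|\frac{|k|^{p}}{1+|\langle E,k\rangle|^{p+q}}\cdot\frac{1}{|k|}
\le\frac{1}{R}\,|f_k|\frac{|k|^{p}}{1+|\langle E,k\rangle|^{p+q}},
\]
so summing over $|k|>R$ and taking the supremum over Lagrangian $E\subset\cC_-$ gives $\|f-f^{\le R}\|_{p-1,q+1}\le R^{-1}\|f\|_{p,q}\le R^{-1}$ (the constant term $f_0$ cancels). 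Hence the unit ball of $\cB^{p,q}$ lies within $\|\cdot\|_{p-1,q+1}$-distance $R^{-1}$ of the set of its low-frequency truncations. That truncation set is a bounded subset of the finite-dimensional space spanned by $\{e^{2\pi i\langle k,\cdot\rangle}:0<|k|\le R\}\cup\{1\}$: indeed each coefficient obeys $|f_k|\le C_R$, because choosing any single admissible Lagrangian $E\subset\cC_-$ in the definition of $\|\cdot\|_{p,q}$ bounds $|f_k|$ in terms of $\|f\|_{p,q}$ and the fixed quantities $|k|^p/(1+|\langle E,k\rangle|^{p+q})$ for $|k|\le R$. A bounded subset of a finite-dimensional normed space is totally bounded, so for each $R$ the truncation set admits a finite $R^{-1}$-net in $\|\cdot\|_{p-1,q+1}$; combining, the unit ball of $\cB^{p,q}$ admits a finite $2R^{-1}$-net in $\cB^{p-1,q+1}$ for every $R$, i.e.\ it is totally bounded, hence relatively compact, in $\cB^{p-1,q+1}$.

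The only genuine subtlety — and the step I would be most careful about — is that elements of $\cB^{p,q}$ are a priori distributions rather than honest functions, so one must check that the Fourier-coefficient manipulations above are legitimate for a general $f\in\cB^{p,q}$ and not just for $f\in\cC^\infty$. This is handled by noting that $f\mapsto f_k$ is a $\|\cdot\|_{p,q}$-continuous linear functional on $\cC^\infty$ for each fixed $k$ (again by selecting one Lagrangian in the supremum), so it extends to $\cB^{p,q}$; the truncation map $f\mapsto f^{\le R}$ is therefore a well-defined bounded operator $\cB^{p,q}\to\cC^\infty$, and the bound $\|f-f^{\le R}\|_{p-1,q+1}\le R^{-1}\|f\|_{p,q}$, being valid on the dense subspace $\cC^\infty$, passes to the completion by continuity. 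With this in place the argument above goes through verbatim for all of $\cB^{p,q}$.
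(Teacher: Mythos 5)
Your proof is correct. The paper gives no explicit argument for this lemma (it is stated right after the Lasota--Yorke estimates with the remark ``It is easy to prove the following''), and what you supply is precisely the standard argument one would fill in: boundedness from the $n=1$ case of the first inequality in \eqref{eq:lasota-1} together with density of $\cC^\infty$ in $\cB^{p,q}$, and relative compactness via the frequency-truncation estimate $\|f-f^{\le R}\|_{p-1,q+1}\le R^{-1}\|f\|_{p,q}$ (using that the exponent $p+q$ in the denominator is unchanged when passing from $(p,q)$ to $(p-1,q+1)$, so only the factor $|k|^{-1}\le R^{-1}$ appears) plus finite-dimensionality of the truncated range. Your care about extending the Fourier-coefficient functionals and the truncation bound from $\cC^\infty$ to the completion is exactly the right point to flag, and the argument is sound.
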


\begin{thm}
\label{thm:quasi-compactness}
If $\Lambda^p<\lambda^{p+q}$, the operator $\cL^{(d)}_\wp$ acting on $\cB^{p,q}$ has an essential
spectral radius smaller than $\mu$. The rest of
the spectrum consists of finitely many eigenvalues of finite multiplicity, all in the unit disk.
The only eigenvalue of modulus one is one and the constant function equal to one is the
corresponding eigenfunction.
\end{thm}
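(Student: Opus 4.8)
The plan is to derive quasi-compactness from the Lasota--Yorke inequality \eqref{eq:lasota-1} together with the relative compactness supplied by Lemma~\ref{lem:compactness}, and then to analyze the peripheral spectrum by hand. First I would invoke the standard Hennion--Nussbaum argument: since $\cL^{(d)}_\wp$ is bounded on $\cB^{p,q}$, since $\|[\cL^{(d)}_\wp]^n f\|_{p,q}\leq C_3\mu^n\|f\|_{p,q}+B\|f\|_{p-1,q+1}$, and since the inclusion $\cB^{p,q}\hookrightarrow\cB^{p-1,q+1}$ maps the unit ball to a relatively compact set, the Nussbaum formula for the essential spectral radius gives $r_{\text{ess}}(\cL^{(d)}_\wp)\leq\mu$. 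Outside the disk of radius $\mu$ the spectrum is therefore discrete, consisting of eigenvalues of finite multiplicity. The bound $\|[\cL^{(d)}_\wp]^n f\|_{p,q}\leq C_1\|f\|_{p,q}$ uniformly in $n$ shows the spectral radius is at most $1$, so all these eigenvalues lie in the closed unit disk.

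Next I would identify the peripheral spectrum. The constant function $\Id$ satisfies $\cL^{(d)}_\wp\Id=\Id$ (indeed each $\cL^{(d)}_{T_i}\Id=\Id$ since $T_i$ is measure preserving, and the Bernoulli weights sum to one), so $1$ is an eigenvalue with eigenfunction $\Id$; note $\|\Id\|_{p,q}=1<\infty$ so $\Id\in\cB^{p,q}$. It remains to show: (i) no other unimodular eigenvalue occurs, and (ii) the eigenvalue $1$ is simple. For this I would use duality \eqref{eq:dual}: if $\cL^{(d)}_\wp\vf=e^{i\theta}\vf$ for $\vf\in\cB^{p,q}$, then testing against smooth $g$ and iterating, $\Leb(\vf\,Q_\wp^n g)=e^{in\theta}\Leb(\vf g)$. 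Using the mixing relation \eqref{eq:mixing} (which is Corollary~\ref{cor:trivial}, stated as available) with $f$ smooth enough and $g$ smooth, $\Leb(\vf\,Q_\wp^n g)\to\Leb(\vf)\Leb(g)$; reconciling this with the oscillating right-hand side forces $e^{i\theta}=1$ and $\Leb(\vf g)=\Leb(\vf)\Leb(g)$ for all smooth $g$, hence $\vf$ is a constant distribution, i.e.\ proportional to $\Id$. This simultaneously rules out other peripheral eigenvalues and shows the geometric eigenspace at $1$ is one-dimensional. To exclude a nontrivial Jordan block at $1$, the cleanest route is again the uniform bound $\sup_n\|[\cL^{(d)}_\wp]^n\|_{p,q}\leq C_1<\infty$: a Jordan block of size $\geq 2$ at an eigenvalue of modulus one would force $\|[\cL^{(d)}_\wp]^n\|$ to grow at least linearly in $n$, a contradiction.

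A subtlety I would address carefully is that \eqref{eq:mixing} as quoted requires $f\in\cC^{p+2d+1}$, a genuine function of prescribed smoothness, whereas the eigendistribution $\vf\in\cB^{p,q}$ is only a distribution. The resolution is to approximate: one chooses the test against $f\in\cC^{p+2d+1}$ on the other side of the pairing, or alternatively notes that $\vf$, being a fixed point of the smoothing-type bound, can itself be shown to be more regular than a generic element of $\cB^{p,q}$; in fact the Lasota--Yorke inequality forces any peripheral eigenfunction to have a Fourier series supported so that $\langle E,k\rangle$ stays bounded, and then one pairs $\vf$ against $Q_\wp^n g$ with $g$ smooth and uses \eqref{eq:dual} in the form $\Leb(\cL_\wp^n f\cdot g)$ with the roles arranged so that the \eqref{eq:mixing} hypotheses apply. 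This reconciliation between the distributional spectral picture and the function-space mixing statement is the main technical point; once it is in place the rest of the argument is bookkeeping with the Lasota--Yorke estimate. Finally, I would remark that essentially the same proof applies verbatim to each individual $\cL^{(d)}_{T_i}$ and, with \eqref{eq:mixing} replaced by its analogue for products along admissible sequences, to the relevant operator cocycle needed in Section~\ref{sec:quenched}.
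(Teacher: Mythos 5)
Your first half — quasi-compactness via the Hennion--Nussbaum theorem from the Lasota--Yorke inequality \eqref{eq:lasota-1} and the compact embedding of Lemma~\ref{lem:compactness}, the bound on the spectral radius by power-boundedness, the verification that $\Id\in\cB^{p,q}$ with $\|\Id\|_{p,q}=1$ and $\cL^{(d)}_\wp\Id=\Id$, and the Jordan-block exclusion from $\sup_n\|[\cL^{(d)}_\wp]^n\|<\infty$ — is correct and is exactly the standard route the paper appeals to by citing \cite{Babook,BKL}.

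The second half, however, is circular. To rule out peripheral eigenvalues other than $1$ and to establish that the eigenspace at $1$ is spanned by $\Id$, you invoke the mixing relation \eqref{eq:mixing}, i.e.\ Corollary~\ref{cor:trivial}. But in the paper Corollary~\ref{cor:trivial} is a \emph{consequence} of Theorem~\ref{thm:quasi-compactness}: its proof begins ``If $p,q$ satisfy the hypothesis of Theorem~\ref{thm:quasi-compactness}, $\cL_\wp$ has a spectral gap $\delta_\wp>0$'' and then deduces \eqref{eq:mixing} from the decomposition $\cL_\wp=\cQ+\cR$. So the peripheral-spectrum claim you are trying to prove is precisely what makes the spectral gap, and hence \eqref{eq:mixing}, available. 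You are right that there is an additional issue in applying \eqref{eq:mixing} when one slot is occupied by a distribution $\vf\in\cB^{p,q}$ rather than a $\cC^{p+2d+1}$ function, but the more fundamental problem is the dependency loop. To close the gap one needs an input that does \emph{not} presuppose the spectral gap of $\cL_\wp$: for instance, the argument used in \cite{BKL,Babook} via positivity and topological mixing of the underlying maps; or an elementary Fourier-mode argument exploiting that the eigenvalue equation $e^{i\theta}\vf_m=\wp\,\vf_{A_0^Tm}+(1-\wp)\,\vf_{A_1^Tm}$ forces $\vf_m=0$ for $m\neq 0$ because the orbits $A_{i_1}^T\cdots A_{i_n}^T m$ escape to infinity in the cone where membership in $\cB^{p,q}$ forces decay; or an appeal to the independently proved decay of correlations of \cite{AS}. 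As written, the proposal does not provide any such independent input.
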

\begin{proof}
Lemma \ref{lem:compactness}, the Lasota-Yorke type inequalities
\eqref{eq:lasota-1} imply the result (see \cite{Babook, BKL} for more
details).
\end{proof}

Before proceeding, let us mention that for any $r,n\in\bN$ we endow the space $\cC^{r}(\bT^n,\bR)$ with the norm $\|g\|_{\cC^r} := \sum_{s=0}^r \| g^{(s)} \|_\infty$.

Moreover, a simple computation shows that $\cC^r\subset \cB^{p,q}$, provided $r>p+2d$. 

\begin{cor}\label{cor:trivial}
The equation \eqref{eq:mixing} holds true.
\end{cor}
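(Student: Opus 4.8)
The plan is to replace $Q_\wp$ by its dual transfer operator $\cL_\wp=\cL^{(1)}_\wp$ and read \eqref{eq:mixing} off the spectral picture of Theorem~\ref{thm:quasi-compactness}. Fix parameters $p,q\in\bR_+$ with $\Lambda^{p}<\lambda^{p+q}$, so that Theorem~\ref{thm:quasi-compactness} applies to $\cL_\wp$ on $\cB^{p,q}$; recall also that $\cC^{r}\subset\cB^{p,q}$ for $r>p+2d$ (here $d=1$). For $f\in\cC^{p+2d+1}$ and $g\in\cC^{q+2d+1}$, all the functions $\cL_\wp^{j}f$ remain in $\cC^{p+2d+1}$ (composition with linear automorphisms), while the $Q_\wp^{j}g$ stay bounded, so iterating the duality relation \eqref{eq:dual} is legitimate and yields, for every $n\in\bN$,
\[
\Leb\bigl(f\,Q_\wp^{n}g\bigr)=\Leb\bigl(\cL_\wp^{n}f\cdot g\bigr).
\]
It therefore suffices to identify the limit of $\cL_\wp^{n}f$ in $\cB^{p,q}$ and to pair it against $g$.

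For the limit I would argue as follows. By Theorem~\ref{thm:quasi-compactness} the spectrum of $\cL_\wp$ on $\cB^{p,q}$ is contained in $\{1\}\cup\{|z|\le\theta\}$ for some $\theta<1$ (finitely many eigenvalues of modulus $<1$, essential spectral radius $<\mu<1$), and $1$ is an eigenvalue with one-dimensional eigenspace $\bC\cdot\Id$; there is no associated Jordan block, because the first inequality in \eqref{eq:lasota-1} gives $\sup_{n}\|\cL_\wp^{n}\|_{\cB^{p,q}}\le C_{1}$, which precludes the linear growth a Jordan block would produce. Hence the spectral projection $\Pi$ at the eigenvalue $1$ has rank one with range $\bC\cdot\Id$. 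Since $\cL_\wp\Pi f=\Pi f$ and, by the spectral gap, $\|\cL_\wp^{n}(\Id-\Pi)\|_{\cB^{p,q}}\le C\theta^{n}$, we get $\cL_\wp^{n}f\to\Pi f$ in $\cB^{p,q}$. Writing $\Pi f=\ell(f)\,\Id$, taking $g=\Id$ in \eqref{eq:dual} (and using $Q_\wp\Id=\Id$) gives $\Leb(\cL_\wp\varphi)=\Leb(\varphi)$, while $\varphi\mapsto\Leb(\varphi)=\varphi_{0}$ is $\|\cdot\|_{p,q}$-continuous; therefore $\Leb(f)=\Leb(\cL_\wp^{n}f)\to\Leb(\Pi f)=\ell(f)\,\Leb(\Id)=\ell(f)$, i.e.\ $\ell(f)=\Leb(f)$. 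Thus $\cL_\wp^{n}f\to\Leb(f)\,\Id$ in $\cB^{p,q}$.

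The remaining step is an elementary Fourier estimate. Keeping only a single index $k\neq0$ in the supremum defining $\|\varphi\|_{p,q}$ and using $\langle E,k\rangle\le|k|$ — valid for any Lagrangian $E\subset\cC_-$, e.g.\ $E=\{v:\hat v=-\check v\}$ — one gets $|\varphi_{k}|\le 2\,\|\varphi\|_{p,q}\,|k|^{q}$ for $\varphi\in\cB^{p,q}$. With $\|\cL_\wp^{n}f\|_{p,q}\le C_{1}\|f\|_{p,q}$ this gives $|(\cL_\wp^{n}f)_{k}|\le 2C_{1}\|f\|_{p,q}\,|k|^{q}$ uniformly in $n$, while $(\cL_\wp^{n}f)_{k}\to\Leb(f)\,\delta_{k,0}$ for each $k$ by the convergence just proved. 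Since $g\in\cC^{q+2d+1}$ ensures $\sum_{k}|k|^{q}\,|g_{-k}|<\infty$, Parseval together with dominated convergence yields
\[
\Leb\bigl(f\,Q_\wp^{n}g\bigr)=\sum_{k}(\cL_\wp^{n}f)_{k}\,g_{-k}\ \xrightarrow[n\to\infty]{}\ \Leb(f)\,g_{0}=\Leb(f)\,\Leb(g),
\]
which is \eqref{eq:mixing} for $f\in\cC^{p+2d+1}$ and $g\in\cC^{q+2d+1}$; replacing the exponent $q$ by $q+2d+1$ produces the pair $p,q>0$ claimed in \eqref{eq:mixing}.

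I do not anticipate any genuine obstacle here: the hard analysis is already packaged in Theorem~\ref{thm:quasi-compactness}, and what is left is the duality bookkeeping, the standard extraction of the invariant density $\Leb(f)\,\Id$ from a spectral gap together with a uniform power bound, and the elementary estimate fixing the admissible smoothness of $g$. Since only the \emph{existence} of suitable exponents $p,q$ is asserted, none of them needs to be optimized.
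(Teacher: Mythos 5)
Your proof is correct and rests on the same two pillars as the paper's: the duality $\Leb(fQ_\wp^n g)=\Leb(\cL_\wp^n f\cdot g)$ and the spectral gap of $\cL_\wp$ on $\cB^{p,q}$ from Theorem~\ref{thm:quasi-compactness}. The difference is in the final pairing step. The paper first establishes the direct bound $|\Leb(fg)|\le 2\|g\|_{\cC^q}\|f\|_{p,q}$ for $f\in\cB^{p,q}$, $g\in\cC^q$, and then writes $\cL_\wp^n f=\Leb(f)\Id+\cR^n f$ with $\|\cR^n\|\le C(1-\delta_\wp)^n$, which immediately yields the \emph{exponential} rate $|\Leb(fQ_\wp^n g)-\Leb(f)\Leb(g)|\le C(1-\delta_\wp)^n\|f\|_{p,q}\|g\|_{\cC^q}$; that quantitative rate is reused later (e.g.\ to sum the correlation series in Lemma~\ref{lem:variance}). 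You instead extract the mode-wise bound $|\varphi_k|\le 2\|\varphi\|_{p,q}|k|^q$, pass to Fourier coefficients, and invoke dominated convergence, which delivers the limit but not the rate, and costs you a higher smoothness assumption on $g$ (harmless, since only existence of $p,q$ is claimed). You also carefully rule out a Jordan block at $1$ via the uniform bound $\sup_n\|\cL_\wp^n\|\le C_1$ and identify the eigenfunctional as $\Leb$ through $\Leb(\cL_\wp\varphi)=\Leb(\varphi)$; the paper takes both for granted as part of the spectral decomposition behind Theorem~\ref{thm:quasi-compactness}. Net effect: same mechanism, slightly more roundabout bookkeeping, and a weaker (qualitative rather than exponential) conclusion that nonetheless suffices for \eqref{eq:mixing}.
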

\begin{proof}
First of all notice that, for all $f\in\cB^{p,q}$ and $g\in\cC^q$ holds
\[
\begin{split}
|\Leb(fg)|&\leq\sum_{l\in\bZ^{2}}|f_l|\,|g_{-l}|\leq \|g\|_{\cC^q} \left(|f_0|+\sum_{l\in\bZ^{2}\setminus\{0\}}|f_l|\,|l|^{-q}\right)\\
&\leq \|g\|_{\cC^q}\|f\|_{p,q}\max\left(1,\sup_{\substack{E\in\bL\\ l\in\bZ^2\setminus\{0\}}}\frac{1+|\langle E,l\rangle|^{p+q}}{|l|^{p+q}}\right)
\leq 2\|g\|_{\cC^q}\|f\|_{p,q}.
\end{split}
\]
If $p,q$ satisfy the hypothesis of Theorem \ref{thm:quasi-compactness},
$\cL_\wp$ has a spectral gap $\delta_\wp>0$. Thus,
\[
|\Leb(f Q_\wp^ng)-\Leb(f)\Leb(g)|=|\Leb(\cL_\wp^nf \cdot
g)-\Leb(f)\Leb(g)|\leq C(1-\delta_\wp)^n\|f\|_{p,q}\|g\|_{\cC^{q}},
\]
because decomposing $\cL_\wp := \cQ+\cR$ with $\cQ f:=\Leb(f)$ we have $\cQ\cR=\cR\cQ=0$ and therefore $\cL_\wp^n f = \cR^n f + \cQ f$, where $\| R^n \|_{\cL(\cB^{p,q})}\leq C(1-\delta_\wp)^n$ by the spectral radius formula. 
\end{proof}
Here is the last fact we need to know about the above functional analytic
setting.
\begin{lem}\label{lem:multiplication}
For each function $g\in\cC^{2p+q+2d+1}(\bT^{2d},\bR)$ the multiplication
operator $M_g$ defined by $M_gf=gf$, is bounded in $\cB^{p,q}$ by $C\|g\|_{\cC^{2p+q+2d+1}}$.
\end{lem}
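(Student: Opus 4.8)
The plan is to estimate everything on the Fourier side. Since $(gf)_k=\sum_{m\in\bZ^{2d}}g_m\,f_{k-m}$, one has $|(gf)_k|\le\sum_m|g_m|\,|f_{k-m}|$; substituting $l=k-m$ and feeding this into the definition of $\|\cdot\|_{p,q}$ (all summands being nonnegative, so that Tonelli allows the sums to be exchanged freely), the bound reduces to comparing, for each Lagrangian $E\subset\cC_-$ and each ``shift'' $m$, the anisotropic weight $\frac{|l+m|^p}{1+|\langle E,l+m\rangle|^{p+q}}$ attached to $l+m$ with the weight $\frac{|l|^p}{1+|\langle E,l\rangle|^{p+q}}$ attached to $l$, \emph{uniformly in both $E$ and $m$}. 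This weight comparison is the heart of the matter.

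The key elementary observation is that, for every Lagrangian $E$, the map $k\mapsto\langle E,k\rangle$ is a seminorm on $\bR^{2d}$: since $E$ is a linear subspace it equals $\|P_E k\|$, the norm of the orthogonal projection of $k$ onto $E$, hence it is nonnegative, subadditive, and bounded by $|k|$. From $\langle E,l\rangle\le\langle E,l+m\rangle+|m|$ one gets $1+\langle E,l\rangle\le(1+\langle E,l+m\rangle)(1+|m|)$, and combining this with $|l+m|\le|l|(1+|m|)$ (valid because $|l|\ge1$ for $l\in\bZ^{2d}\setminus\{0\}$) and the trivial inequalities $1+x^s\le(1+x)^s\le2^{s-1}(1+x^s)$ for $x\ge0$, $s=p+q\ge1$, I would obtain, for all $l\neq0$, $l+m\neq0$ and all $E\subset\cC_-$,
\[
\frac{|l+m|^p}{1+|\langle E,l+m\rangle|^{p+q}}\le 2^{p+q-1}(1+|m|)^{2p+q}\,\frac{|l|^p}{1+|\langle E,l\rangle|^{p+q}}.
\]
Summing over $l\neq0$ and over $m$, and treating the $l=0$ contribution (i.e. $k=m$) separately with the crude bound $|f_0|\,|m|^p$, this yields
\[
\sup_{\substack{E\in\bL\\ E\subset\cC_-}}\sum_{k\neq0}|(gf)_k|\frac{|k|^p}{1+|\langle E,k\rangle|^{p+q}}\le 2^{p+q-1}\Bigl(\sum_m|g_m|(1+|m|)^{2p+q}\Bigr)\|f\|_{p,q}+\Bigl(\sum_{m\neq0}|g_m|\,|m|^p\Bigr)|f_0|,
\]
and the right-hand side is already independent of $E$.

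The zero mode is handled on its own: $|(gf)_0|=|\Leb(gf)|\le2\|g\|_{\cC^q}\|f\|_{p,q}$, exactly the estimate used in the proof of Corollary~\ref{cor:trivial}. Finally, I would invoke the standard decay of Fourier coefficients of a $\cC^r$ function on the torus, $|g_m|\le C\|g\|_{\cC^r}(1+|m|)^{-r}$, which makes both $m$-sums above converge precisely when $r-(2p+q)>2d$, that is for $r=2p+q+2d+1$; this gives $\|gf\|_{p,q}\le C\|g\|_{\cC^{2p+q+2d+1}}\|f\|_{p,q}$ for $f\in\cC^\infty$, hence — since $\cC^{2p+q+2d+1}\subset\cB^{p,q}$, so that $gf\in\cB^{p,q}$ — the same bound extends to all $f\in\cB^{p,q}$ by density (for $f_n\to f$ in $\|\cdot\|_{p,q}$ with $f_n\in\cC^\infty$, the sequence $gf_n$ is Cauchy in $\|\cdot\|_{p,q}$ and converges to $gf$ in the sense of distributions). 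The hard part is the displayed weight comparison: one must bound $1+|\langle E,l+m\rangle|^{p+q}$ from below by a fixed multiple of $1+|\langle E,l\rangle|^{p+q}$ uniformly over the entire family of Lagrangian subspaces $E\subset\cC_-$, and this is exactly what subadditivity of $\langle E,\cdot\rangle$ together with the harmless $+1$ regularization provide.
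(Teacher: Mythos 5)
Your proof is correct and follows essentially the same route as the paper's: both work on the Fourier side via the convolution formula, both rely on subadditivity of $k\mapsto\langle E,k\rangle$ to compare the anisotropic weights at $l$ and at $k=l+m$ (the paper applies it as $|\langle E,l\rangle|\leq|\langle E,k\rangle|+|\langle E,k-l\rangle|$), and both land on the same regularity threshold $r>2p+q+2d$. The only divergence is bookkeeping: you sum over the $g$-index $m$ outermost using the weight $\sum_m|g_m|(1+|m|)^{2p+q}$, whereas the paper factors out $\sup_m|g_m|(1+|m|^{r})$ and sums the residual kernel over $k$ for fixed $l$, which yields the same convergence condition.
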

\begin{proof}
We define the norm\footnote{We are aware that our choices of norms and the subsequent estimates, are not the optimal ones. We are simply trying to simplify the arguments as much as possible even at the expense of some, not really relevant, optimality.}
\[
\| g \|_{r} := \sup_{k \in \bZ^{2d}} |g_k| (1+|k|^{r}).
\]
Clearly $g\in\cC^{r}(\bT^{2d},\bR)$ implies $\|g\|_{r}\leq \|g\|_{\cC^r}$.
The $\cB^{p,q}$-norm of the product then reads
\begin{equation}\label{eq:prod-norm}
\| fg \|_{p,q} = \sup_{\substack{E \in \bL\\ E \subset \cC_-}}
\sum_{\substack{k,l\in\bZ^{2d}\\ k \neq 0}} |f_l| |g_{k-l}| \frac{ |k
|^p}{1+  |\langle E,k  \rangle |^{p+q}} + \sum_{l \in \bZ^{2d}} |f_l|
|g_{-l}|.
\end{equation}
Let us analyze the second term first:
\[
\sum_{l \in \bZ^{2d}} |f_l| |g_{-l}| \leq \| g\|_{r} \left( |f_0|
+\sum_{l \in \bZ^{2d} \setminus \{0\}} |f_l| |l|^{-r} \right).
\]
The desired bound follows, if $r\geq q$, from
\[
\frac{1}{2}| l |^{-q} \leq \frac{ | l |^p}{1+  |\langle E,l  \rangle
|^{p+q}}.
\]
Now, look at the summand in the first term of \eqref{eq:prod-norm}, ignoring
the $l=0$ case that can be taken care of separately.
\[
\begin{split}
|f_l| |g_{k-l}| \frac{ |k |^p}{1+  |\langle E,k  \rangle |^{p+q}} &\leq\|g\|_r
\left[ |f_l|  \frac{ |l |^p}{1+  |\langle E,l  \rangle |^{p+q}} \right]  \\
&\quad \times \frac{ |k |^p}{1+  |\langle E,k  \rangle |^{p+q}} \frac{1+
|\langle E,l  \rangle |^{p+q}}{ |l |^p} \frac{1}{1+|k-l|^{r}}.
\end{split}
\]
Notice that $|\langle E, l\rangle|\leq |\langle E, k\rangle|+|\langle E, k-l\rangle|$. Thus, on the one hand
\[
\sum_k \frac{ |k |^p}{1+  |\langle E,k  \rangle |^{p+q}} \frac{1+  |\langle E,k
\rangle |^{p+q}}{ |l |^p} \frac{1}{1+|k-l|^{r}} \leq \sum_k\frac{ |k |^p } {|l |^p(1+|k-l|^{r})}\leq C.
\]
One the other hand
\[
\sum_k \frac{ |k |^p}{1+  |\langle E,k  \rangle |^{p+q}} \frac{1+  |\langle E,k-l
\rangle |^{p+q}}{ |l |^p} \frac{1}{1+|k-l|^{r}} \leq 3\sum_k\frac{ |k |^p } {|l |^p(1+|k-l|^{r-p-q})}\leq C,
\]
provided $r> 2p+q+2d$. The general term of $(|\langle E, k\rangle|+|\langle E, k-l\rangle|)^{p+q}$ is bounded similarly, using $|\langle E, k\rangle|^n |\langle E, k-l\rangle|^{p+q-n} \leq (1+|\langle E, k\rangle|^n)(1+|\langle E, k-l\rangle|^{p+q-n})$ for each $n=0,\dots,p+q$.
 \end{proof}

\section{Averaged CLT}\label{sec:average}
To establish Lemma \ref{lem:clt-averaged} it suffices to compute
\[
\lim_{N\to\infty}\bE_{{\bf P}_\wp}\left(e^{i\frac \lambda{\sqrt
N}\sum_{k=0}^{N-1}
f_{\omega,k}}\right),
\]
where $f_{\omega,k} := X_k(\omega,\cdot) := f\circ
T_{\omega_k}\circ\dots\circ T_{\omega_1}$ with $f_{\omega,0}:=f$,
and show that this limit is the characteristic function $e^{-\frac
12\lambda^2\Var_\wp}$ of the centered normal distribution with some
variance $\Var_\wp$. Recalling the transfer operators in \eqref{eq:transfer1D},
\begin{equation}\label{eq:powers}
\bE_{{\bf P}_\wp}\left(e^{i\frac \lambda{\sqrt N}\sum_{k=0}^{N-1}
f_{\omega,k}}\right)=
\Leb(\cL_{i\lambda N^{-\frac 12} f,\wp}^N\, 1).
\end{equation}
Hence Theorem \ref{thm:quasi-compactness} and Lemma
\ref{lem:multiplication} show that $\cL_{i\lambda N^{-\frac 12} f,\wp}$ is
a bounded operator on $\cB^{p,q}$ provided $f\in\cC^{2p+q+5}$ and depends
analytically on $\lambda$. To continue it is necessary to study the leading
eigenvalue of such an operator.

Note that, in general, given any positive operator $\cL$ on the spaces $\cB^{p,q}$ with maximal
simple eigenvalue one, with a spectral gap and $\Leb(\cL\vf)=\Leb(\vf)$ for
each smooth $\vf$, for any smooth complex valued function $g$ we can
define the family of operators $\cL_\nu\vf:=\cL(e^{\nu g}\vf)$ and, thanks to Lemma \ref{lem:multiplication}, the
standard perturbation theory applies. Thus there exists $\phi_\nu$,
$\mu_\nu$, with $\mu_0=1$, such that 
\[
\cL_\nu\phi_\nu=\mu_\nu \phi_\nu,\quad \Leb( \phi_\nu)=1.
\]
Differentiating this relation with respect to $\nu$ and integrating
one readily obtains
\begin{equation}\label{eq:oneder}
\mu'_\nu=\Leb \left( ge^{\nu g}\phi_\nu+e^{\nu g}\phi'_\nu \right)
\end{equation}
and, setting $\nu=0$, $\phi'_0=(\Id-\cL)^{-1}[\cL(g\phi_0)-\phi_0\Leb(
g\phi_0)]=\sum_{n=0}^\infty\cL^n[\cL(g\phi_0)-\phi_0\Leb(
g\phi_0)]$.\footnote{The latter is well defined since $(\Id-\cL)^{-1}$ is
applied on a function from which the eigendirection of $\cL$ corresponding to eigenvalue 1 has been projected out, and because of the spectral gap.}
Finally, differentiating again yields
\begin{equation}\label{eq:twoder}
\mu''_0=\Leb( g^2\phi_0)+2\sum_{n=0}^\infty\Leb( g \cL^n \left[ \cL(g\phi_0)- \phi_0\Leb(
g\phi_0)\right]). 
\end{equation}

Thus, by standard perturbation theory and in view of Theorem \ref{thm:quasi-compactness} we can write
$\cL_\nu=\mu_\nu \cQ_\nu+\cR_\nu$ where $\cQ_\nu^2=\cQ_\nu$, $\cR_\nu\cQ_\nu=\cQ_\nu\cR_\nu=0$, the spectral radius of $\cR_\nu$ is smaller than $\rho<1$ for all $|\nu|\leq \nu_0$ for some $\nu_0>0$, and $|\mu_\nu-1-\mu'_0\nu-\frac 12 \mu''_0\nu^2|\leq C|\nu|^3$ for some fixed constant $C>0$ and $|\nu|\leq \nu_0$. In addition, $\cQ_\nu$ is a rank one operator of the form $\phi_\nu\otimes m_\nu$ where $m_\nu$ belongs to the dual of the space, $m_0=\Leb$, and $|m_\nu(1)-1|\leq C|\nu|$.

If we apply the above to the operator $\cL_{i\lambda N^{-\frac 12} f,\wp}$, $\nu=i\lambda N^{-\frac 12}$ (hence $g=f$, and $\phi_0\equiv 1$), then remembering equation \eqref{eq:powers}
it follows that 
\begin{equation} \label{eq:charfn}
\begin{split}
\bE_{{\bf P}_\wp}\left( e^{i\frac \lambda{\sqrt
N}\sum_{k=0}^{N-1} f_{\omega,k}}\right)&=\Leb\left(\mu_\nu^N\cQ_\nu 1+\cR_\nu^N1\right)=e^{N\ln(\mu_\nu)}(1+\cO(|\lambda|N^{-\frac 12}))+\cO(\rho^N)\\
&=e^{-\frac 12\mu''_0\lambda^2+N\cO\left((|\lambda|/N^{-\frac 12})^3\right)}+\cO(|\lambda|N^{-\frac 12}+\rho^N),
\end{split}
\end{equation}
because $\Leb(\cQ_\nu 1)=\Leb(\phi_\nu m_\nu(1))=m_\nu(1)$ and $|\Leb(\cR_\nu^N 1)|\leq \| \cR_\nu^N \|_{\cL(\cB^{p,q})}\leq C\rho^N$.
Hence, if $|\lambda|/\sqrt N$ is sufficiently small, we have
\begin{equation}\label{eq:clt}
\left|\bE_{{\bf P}_\wp}\left( e^{i\frac \lambda{\sqrt
N}\sum_{k=0}^{N-1} f_{\omega,k}}\right)-e^{-\frac 12\mu''_0\lambda^2}\right|\leq C \frac{1+|\lambda|^3}{\sqrt  N}.
\end{equation}

\begin{lem}\label{lem:variance}
The quantity $\Sigma_\wp^2\in\bR$ defined by
\[
\Sigma_\wp^2 :=\lim_{N\to\infty}\frac
1N\bE_{{\bf P}_\wp}\left(\left[\sum_{k=0}^{N-1} X_k\right]^2\right)
\]
is always nonnegative and given by 
\begin{equation}\label{eq:var-series}
\Leb(f^2)+2\sum_{n=1}^{\infty}\Leb( f \cL_\wp^n f) = \mu''_0.
\end{equation}
The map $[0,1]\to\bR_+:\wp\mapsto\Var_\wp$ is analytic.
Moreover, if $f$ is not a $\cC^0$ simultaneous coboundary for the admissible automorphisms $T_i$ (see Remark \ref{livsic}), then $\Sigma_\wp^2>0$.
\end{lem}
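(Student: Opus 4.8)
The plan is to establish three things in turn: (i) the stated formula $\Sigma_\wp^2 = \mu''_0$ with $\mu''_0$ given by the series \eqref{eq:var-series}; (ii) nonnegativity of $\Sigma_\wp^2$; and (iii) strict positivity under the non-coboundary hypothesis, together with analyticity in $\wp$.

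For (i), I would compute the second derivative at $\lambda=0$ of the characteristic function in \eqref{eq:charfn}. Since $\bE_{{\bf P}_\wp}(S_N) = \Leb(S_N) = 0$, the Taylor expansion in $\lambda$ of $\bE_{{\bf P}_\wp}(e^{i\lambda N^{-1/2} S_N})$ at $\lambda = 0$ has no linear term, and its quadratic coefficient is $-\tfrac12 N^{-1}\bE_{{\bf P}_\wp}(S_N^2)$. On the other hand, from the perturbative expansion (the bound $|\mu_\nu - 1 - \mu'_0\nu - \tfrac12\mu''_0\nu^2|\le C|\nu|^3$ together with $\cL_\nu = \mu_\nu\cQ_\nu + \cR_\nu$), the same coefficient equals $-\tfrac12\mu''_0$ up to an error that vanishes as $N\to\infty$; here one uses $\mu'_0 = \Leb(f\cdot 1) = \Leb(f) = 0$, which kills the otherwise-present $N\mu'_0\nu$ term. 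Hence $\lim_{N\to\infty} N^{-1}\bE_{{\bf P}_\wp}(S_N^2) = \mu''_0$, so the limit defining $\Sigma_\wp^2$ exists and equals $\mu''_0$. The identification of $\mu''_0$ with the series $\Leb(f^2) + 2\sum_{n\ge1}\Leb(f\cL_\wp^n f)$ is then just \eqref{eq:twoder} specialized to $g = f$, $\phi_0 \equiv 1$: the term $\Leb(g^2\phi_0)$ becomes $\Leb(f^2)$, and rearranging $\sum_{n\ge0}\Leb(f\cL_\wp^n[\cL_\wp f - \Leb(f)]) = \sum_{n\ge1}\Leb(f\cL_\wp^n f)$ using $\Leb(f) = 0$ and $\Leb(f\cL_\wp^n 1) = \Leb(f) = 0$. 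Convergence of the series is guaranteed by the spectral gap of $\cL_\wp$ on $\cB^{p,q}$ from Theorem~\ref{thm:quasi-compactness}.

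For (ii), nonnegativity is immediate once the limit exists: $N^{-1}\bE_{{\bf P}_\wp}(S_N^2)\ge 0$ for every $N$, so the limit $\Sigma_\wp^2$ is $\ge 0$.

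For analyticity in $\wp$: the operator $\cL_\wp = \wp\,\cL_{T_0} + (1-\wp)\cL_{T_1}$ depends (real-)analytically — in fact linearly — on $\wp$, as does $\cL_{f,\wp}$, and the leading eigenvalue $\mu_\nu$ and eigenprojector of a family of operators with a spectral gap depend analytically on all parameters entering analytically (standard analytic perturbation theory, applicable thanks to Lemma~\ref{lem:multiplication} which controls the multiplication operator $M_f$ on $\cB^{p,q}$). Differentiating twice in $\nu$ and setting $\nu = 0$, the expression \eqref{eq:twoder} exhibits $\mu''_0$ as a convergent sum of terms each analytic in $\wp$, with convergence uniform on compact $\wp$-intervals because the spectral gap is bounded below there; hence $\wp\mapsto\Sigma_\wp^2$ is analytic on $[0,1]$. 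The main obstacle is the strict positivity claim (iii): one must rule out $\mu''_0 = 0$. I would argue by contradiction. The series for $\mu''_0$ can be rewritten, using the transfer-operator/Markov-operator duality \eqref{eq:dual} and a telescoping computation, as $\lim_N N^{-1}\bE_{{\bf P}_\wp}(S_N^2)$; if this vanishes then $S_N$ has uniformly bounded $L^2({\bf P}_\wp)$-norm, and the standard Gordin-type / Leonov argument produces an $L^2$ function $g$ with $f = g - Q_\wp g$, i.e. $\Leb$-almost everywhere $f(x) = g(x) - \wp\, g(T_0 x) - (1-\wp)\,g(T_1 x)$. Feeding this back into $\bE_{{\bf P}_\wp}(S_N^2) = o(N)$ and exploiting that the two maps are chosen independently at each step (so the cross terms decouple), one upgrades this to $f = g - g\circ T_0$ and $f = g - g\circ T_1$ separately, for each admissible $T_i$; finally a regularity bootstrap — using that $f$ is smooth and the cohomological equation forces $g$ into better and better Banach spaces $\cB^{p,q}$, hence into $\cC^0$ by Sobolev-type embedding — shows $g$ can be taken continuous. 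This contradicts the hypothesis that $f$ is not a simultaneous $\cC^0$ coboundary, so $\Sigma_\wp^2 > 0$.
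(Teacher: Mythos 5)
Your items (i), (ii), and the analyticity argument are correct and essentially match the paper: the paper gets the formula by computing $\bE_{{\bf P}_\wp}(S_N^2)$ directly and reading off the limit, you get it by differentiating \eqref{eq:charfn} twice — both are fine and both use \eqref{eq:twoder} to match $\mu''_0$ with the series. The paper's positivity argument is also the one you sketch (Banach--Alaoglu to produce $g\in L^2$ with $f = g - Q_\wp g$, then a martingale decomposition to upgrade to $f = g - g\circ T_i$ for each admissible $i$). Your phrase ``the cross terms decouple'' under-explains the crucial step: what the paper actually uses is that $M_n := G_n - G_0 + S_n$ (with $G_n := g\circ\pi\circ F^n$) is a \emph{martingale}, so that $\bE_{{\bf P}_\wp}(M_N^2)$ is a sum of $N$ equal increment variances; together with Jensen this turns the $O(1)$ bound on $\bE(S_N^2)$ into $\wp\,\Leb([f+g\circ T_0-g]^2)+(1-\wp)\Leb([f+g\circ T_1-g]^2)=0$. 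That is the martingale orthogonality, not independence of the maps, and it should be spelled out.

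The genuine gap is the final regularity step. You claim that the cohomological equation pushes $g$ ``into better and better Banach spaces $\cB^{p,q}$, hence into $\cC^0$ by Sobolev-type embedding.'' This fails: the spaces $\cB^{p,q}$ are \emph{anisotropic} spaces of distributions — the norm controls Fourier decay with exponent $p$ only along a cone of frequency directions and allows growth up to roughly $|k|^{q-2d}$ in the complementary directions, so $\cB^{p,q}\not\subset \cC^0$ for any $p,q$. No amount of improving $p,q$ gives continuity. The correct argument (see the paper's footnote) uses the cohomological equation in \emph{both} time directions for a single hyperbolic $T_i$: the backward form $g=\sum_{k\ge1}\cL_{T_i}^k f$ controls the unstable directional derivative of $g$, while the forward form $g=\sum_{k\ge0}f\circ T_i^k$ (in a weak sense, via mixing) controls the stable directional derivative. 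Together these give $\nabla g\in L^\infty$, hence $g\in W^{1,2}$, and then Morrey's inequality — applied on the genuine isotropic Sobolev space, not on $\cB^{p,q}$ — yields $g\in\cC^0$. Equivalently one can just invoke the classical Livschitz theorem for any one admissible $T_i$; that is what the paper's main text does, and it is the reference you should lean on rather than an embedding of the anisotropic spaces.
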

\begin{proof}
A direct computation yields 
\[
\begin{split}
\bE_{{\bf P}_\wp}\left(\left[\sum_{k=0}^{N-1} X_k\right]^2\right)&=\sum_{k=0}^{N-1}\bE_{{\bf P}_\wp}(X_k^2)+2\sum_{0 \leq j < k \leq N-1}\bE_{{\bf P}_\wp}(X_kX_j)\\
&=N\Leb(f^2)+2\sum_{0\leq j<k \leq N-1}\Leb(fQ_\wp^{k-j}f)\\
&=N\left[\Leb(f^2)+2\sum_{n=1}^{N-1}\Leb( f \cL_\wp^n f)\right]-2\sum_{n=1}^{N-1}n\,\Leb( f \cL_\wp^n f)
\end{split}
\]
Using Corollary \ref{cor:trivial}, the last sum converges exponentially fast in $n$. Hence $\Sigma_\wp^2$ exists and is nonnegative simply because it is the limit of a nonnegative quantity. To address this last issue, suppose $\Sigma_\wp^2=0$. Then
\[
\left|\bE_{{\bf P}_\wp}\left(\left[\sum_{k=0}^{N-1} X_k\right]^2\right)\right|\leq 2N\sum_{n=N}^{\infty}|\Leb( f \cL^n_\wp f)|+2\sum_{n=1}^{N-1}n\,|\Leb( f \cL^n_\wp f)|\leq C
\]
uniformly in $N$. This means that the random variables
$Z_N:=\sum_{k=0}^{N-1} X_k$ are uniformly bounded in $L^2$. By the Banach--Alaoglu Theorem, they
form a weak-* relatively compact set. 
We can then extract a subsequence $(N_j)_{j=1}^\infty$ such that, for each $\vf\in L^2(\Omega,{\bf P}_\wp)$,
\[
\lim_{j\to\infty}\bE_{{\bf P}_\wp}(\vf Z_{N_j})=\bE_{{\bf P}_\wp}(\vf Y)
\]
for some $L^2$ random variable $Y$. If we choose $\vf$ to be a function of the $x$ only, it follows
that 
\[
\lim_{j\to\infty}\Leb(\vf\sum_{n=0}^{N_j-1} Q_\wp^nf)=\Leb(\vf g)
\]
where $g=\bE_\wp(Y)\in L^2(\bT^2,\Leb)$. On the other hand, for each smooth $\vf$,
\[
\begin{split}
\Leb(\vf(f-g+Q_\wp g))&=\lim_{j\to\infty}\Leb\left(\vf\left(f-\sum_{n=0}^{N_j-1} Q_\wp^nf+\sum_{n=0}^{N_j-1} Q_\wp^{n+1}f\right)\right)\\
&=\lim_{j\to\infty}\Leb(\vf Q_\wp^{N_j} f)=0.
\end{split}
\] 
That is $f=g-Q_\wp g$. Next, consider the $L^2$ random variables $G_n:=g\circ \pi \circ F^n$
and $M_{n+1}=\sum_{k=0}^{n}( X_k+G_{k+1}-G_k)=G_{n+1}-G_0+\sum_{k=0}^{n}X_k$. For each $N\in\bN$, we use Jensen's inequality to get
\begin{equation}\label{eq:upper}
\begin{split}
C &\geq \bE_{{\bf P}_\wp}\left(\left[\sum_{k=0}^{N-1} X_k\right]^2\right)=\bE_{{\bf P}_\wp}\left(\left[M_N-G_N+g\right]^2\right)\\
&\geq \bE_{{\bf P}_\wp}\left(M_N^2\right)-2\sqrt{\bE_{{\bf P}_\wp}\left(M_N^2\right)\bE_{{\bf P}_\wp}\left([G_N-g]^2\right)}.
\end{split}
\end{equation}
In fact, the process $(M_n)$ is a martingale, since
\[
\bE_{{\bf P}_\wp}(G_{k+1}\;|\; x,\omega_1,\dots,\omega_k)=\bE_{{ P}_\Leb}(g(x_{k+1})\;|\; x_k)=Q_\wp g(x_k)=(Q_\wp g)\circ\pi\circ F^k(x,\omega).
\]
Thus,
\[
\begin{split}
\bE_{{\bf P}_\wp}\left(M_N^2\right)&=\sum_{k=0}^{N-1}\bE_{{\bf P}_\wp}\left(\left[X_k+G_{k+1}-G_k\right]^2\right)\\
&=N\left\{\wp\;\Leb([f+g\circ T_0-g]^2)+(1-\wp)\Leb([f+g\circ T_1-g]^2)\right\}.
\end{split}
\]
The inequality \eqref{eq:upper} and the boundedness of $\bE_{{\bf
P}_\wp}\left([G_N-g]^2\right)$ imply that $\wp\;\Leb([f+g\circ
T_0-g]^2)+(1-\wp)\Leb([f+g\circ T_1-g]^2)=0$, that is $f+g\circ T_i-g=0$ for each admissible $T_i$.

The continuity of $g$ follows from the usual Livschitz rigidity arguments.
\footnote{In fact, in the present simple case one can provide the following direct proof: clearly $g=(\Id-\cL_{T_i})^{-1}\cL_{T_i} f=\sum_{k=1}^{\infty}\cL_{T_i}^kf$ for an admissible choice of $T_i$, convergence taking place in the $\|\cdot\|_{p,q}$ norm. Let $v^{u,s}$ be the unstable and stable vectors of $T_i$, respectively, and $\varphi\in\cC^\infty$. Then
\[
|\Leb(\langle v^u,\nabla\vf\rangle g)|\leq\sum_{k=1}^\infty|\Leb(\vf \langle v^u,\nabla \cL_{T_i}f\rangle)|
=\sum_{k=1}^\infty|\Leb(\vf \langle T_i^{-k}v^u, \cL_{T_i}^k \nabla f\rangle)|
\leq \sum_{k=1}^\infty \|\nabla f\|_\infty\lambda^{-k}\|\vf\|_{L^1}.
\]
On the other hand, $g(x)=\sum_{k=0}^nf\circ T_i^k+g\circ T_i^{n+1}$, and the mixing of $T_i$ (proven exactly as in Corollary \ref{cor:trivial}) implies
\[
|\Leb(\langle v^s,\nabla\vf\rangle g)|\leq \sum_{k=0}^{\infty}|\Leb(\langle v^s,\nabla\vf\rangle f\circ T^k_i)|
\leq  \sum_{k=0}^{\infty}\|\nabla f\|_\infty\lambda^{-k}\|\vf\|_{L^1}.
\]
Taking the sup over $\{\vf\in\cC^\infty\,:\,\|\vf\|_{L^1}=1\}$, it follows that $\nabla g\in L^\infty$, which implies $g\in W^{1,2}$. Hence, by Morrey's inequality, $g\in\cC^0$.
} 

In order to prove analyticity of the variance $\Var_\wp$ with respect to $\wp$, first notice that there is a positive lower bound on the spectral gap $\delta_\wp$ appearing in the proof of Corollary~\ref{cor:trivial} in a complex neighborhood of $[0,1]$. Thus, the series in \eqref{eq:var-series} converges uniformly in $\wp$. The partial sums are polynomials of $\wp$, hence the limit $\Var_\wp$ is an analytic function of $\wp$. 
\end{proof}

We finish the section with two simple but important results.
\begin{lem}\label{lem:eigenvalue}
Denoting $f_2(x,y):=f(x)-f(y)$, the operator $\cL^{(2)}_{i\lambda N^{-\frac 12}f_2,\wp}$ satisfies
\[
\left |\Leb_2 \bigl( \bigl [\cL^{(2)}_{i\lambda N^{-\frac 12}f_2,\wp} \bigr ]^N 1 \bigr)- e^{-\lambda^2\Var_\wp} \right| \leq C\frac{1+|\lambda|^3}{\sqrt{N}}.
\]
\end{lem}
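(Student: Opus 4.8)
The plan is to run the perturbative scheme of Section~\ref{sec:average} for the operator $\cL^{(2)}_\wp$ acting on $\cB^{p,q}(\bT^4)$, and then to identify the second derivative of its leading eigenvalue with $2\Var_\wp$. By Theorem~\ref{thm:quasi-compactness} (with $d=2$) the positive operator $\cL^{(2)}_\wp$ on $\cB^{p,q}$ has a spectral gap, with simple leading eigenvalue $1$, eigenfunction $\phi_0\equiv 1$, and $\Leb_2(\cL^{(2)}_\wp\vf)=\Leb_2(\vf)$; by Lemma~\ref{lem:multiplication} (with $2d=4$ and $f_2\in\cC^\infty$) the family $\nu\mapsto\cL^{(2)}_{\nu f_2,\wp}\vf=\cL^{(2)}_\wp(e^{\nu f_2}\vf)$ is a bounded analytic family on $\cB^{p,q}$. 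Hence the discussion opening Section~\ref{sec:average} applies verbatim, producing $\mu_\nu,\phi_\nu,\cQ_\nu=\phi_\nu\otimes m_\nu,\cR_\nu$ with $\mu_0=1$, $m_0=\Leb_2$, $|m_\nu(1)-1|\le C|\nu|$, $|\mu_\nu-1-\mu'_0\nu-\tfrac12\mu''_0\nu^2|\le C|\nu|^3$, and spectral radius of $\cR_\nu$ below some $\rho<1$ for $|\nu|\le\nu_0$. Setting $\nu=i\lambda N^{-\frac12}$ and copying \eqref{eq:charfn}--\eqref{eq:clt} line for line (enlarging $C$ if necessary to absorb the regime $|\lambda|\gtrsim\sqrt N$, where the left side is trivially $\le 2$ because $\Var_\wp\ge0$) yields
\[
\Bigl|\Leb_2\bigl(\bigl[\cL^{(2)}_{i\lambda N^{-\frac12}f_2,\wp}\bigr]^N 1\bigr)-e^{-\frac12\mu''_0\lambda^2}\Bigr|\le C\,\frac{1+|\lambda|^3}{\sqrt N}
\]
provided $\mu'_0=0$. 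It therefore remains to compute $\mu'_0$ and $\mu''_0$ for $g=f_2$, $\phi_0\equiv1$.

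The key point is the product structure: since $\bigoplus_1^2 T_i=T_i\times T_i$ on $\bT^4=\bT^2\times\bT^2$, we have $\cL^{(2)}_{T_i}=\cL_{T_i}\otimes\cL_{T_i}$, and with $\cL_{\vec\imath}:=\cL_{T_{i_n}}\cdots\cL_{T_{i_1}}$,
\[
\bigl[\cL^{(2)}_\wp\bigr]^n=\sum_{(i_1,\dots,i_n)\in\{0,1\}^n}p_{i_1}\cdots p_{i_n}\;\cL_{\vec\imath}\otimes\cL_{\vec\imath}.
\]
Writing $f_2=f\otimes 1-1\otimes f$ and using $\cL_{\vec\imath}1=1$ together with $\Leb(\cL_{\vec\imath}f)=\Leb(f)=0$, every cross term vanishes upon integration and one finds, for all $n\ge 0$,
\[
\Leb_2\bigl(f_2\,\bigl[\cL^{(2)}_\wp\bigr]^n f_2\bigr)=\sum_{(i_1,\dots,i_n)}p_{i_1}\cdots p_{i_n}\,2\,\Leb(f\,\cL_{\vec\imath}f)=2\,\Leb(f\,\cL_\wp^n f),
\]
the last equality being the expansion of $\cL_\wp^n$ recalled in Section~\ref{sec:gap}. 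In particular $\Leb_2(f_2)=0$, so \eqref{eq:oneder} gives $\mu'_0=\Leb_2(f_2\phi_0)=0$; and in \eqref{eq:twoder} the projection term $\Leb_2(f_2)$ drops, leaving
\[
\mu''_0=\Leb_2(f_2^2)+2\sum_{n=0}^\infty\Leb_2\bigl(f_2\,\bigl[\cL^{(2)}_\wp\bigr]^{n+1} f_2\bigr),
\]
which by the previous display equals $2\bigl[\Leb(f^2)+2\sum_{n=1}^\infty\Leb(f\,\cL_\wp^n f)\bigr]=2\Var_\wp$ by Lemma~\ref{lem:variance}. Substituting $\mu''_0=2\Var_\wp$ into the displayed estimate gives exactly the asserted bound, since $e^{-\frac12\mu''_0\lambda^2}=e^{-\lambda^2\Var_\wp}$.

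Since the argument is a reprise of Section~\ref{sec:average} followed by elementary algebra, there is no real obstacle; the only points needing care are (i) checking that Theorem~\ref{thm:quasi-compactness} and the perturbation scheme are genuinely $d$-independent, which they are, being stated for general $d$, and (ii) the tensor bookkeeping that collapses the $\bT^4$-integrals $\Leb_2(f_2[\cL^{(2)}_\wp]^n f_2)$ to $2\Leb(f\cL_\wp^n f)$ on $\bT^2$. This is precisely where the choice $f_2(x,y)=f(x)-f(y)$ together with $\Leb(f)=0$ matters: it annihilates the cross terms and produces the factor $2$ which turns the exponent into $-\lambda^2\Var_\wp$ rather than $-\tfrac12\lambda^2\Var_\wp$.
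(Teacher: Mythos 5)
Your proof is correct and follows essentially the same route as the paper: rerun the perturbative scheme of Section~\ref{sec:average} for $\cL^{(2)}_\wp$ and show $\mu_0''=2\Var_\wp$. The only cosmetic difference is that the paper collapses the tensor-product bookkeeping into the single identity $[\cL^{(2)}_\wp]^n f_2(x,y)=\cL_\wp^n f(x)-\cL_\wp^n f(y)$ before integrating, whereas you first expand over $(i_1,\dots,i_n)$ and then kill the cross terms; both yield $\Leb_2(f_2[\cL^{(2)}_\wp]^n f_2)=2\Leb(f\,\cL_\wp^n f)$ and hence the asserted bound.
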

\begin{proof}
The argument follows verbatim the previous discussion. Thus to prove the Lemma we only need to compute the second derivative of the leading eigenvalue, which we still denote $\mu_\nu$, and to show that $\mu_0''=2\Sigma_\wp^2$. In analogy with \eqref{eq:twoder},
\[
\mu''_0=\Leb_2( f_2^2)+2\sum_{n=1}^\infty\Leb_2\left( f_2  [\cL^{(2)}_{\wp} ]^n f_2\right),
\]
where $\Leb_2$ is the normalized Lebesgue measure on $\bT^4$. Then $ [\cL^{(2)}_{\wp} ]^n f_2(x,y)=\cL_\wp ^n f(x)-\cL_\wp^n f(y)$, $\Leb(1)=1$, and $\Leb(f)=0$ yield $\mu_0''=2\Sigma_\wp^2$. 
\end{proof}

\begin{lem}\label{lem:averaged-LDE}
There exists $L_0>0$ such that, for all $L\in (0,L_0)$, the following estimate holds,
\begin{equation}\label{eq:largedev}
\bfP_\wp\left(\left\{\left| \frac 1N\sum_{k=0}^{N-1}
f_{\omega,k}\right|\geq L\right\}\right)\leq Ce^{-CL^{2}N}.
\end{equation}
\end{lem}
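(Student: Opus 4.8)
The plan is to obtain \eqref{eq:largedev} from an exponential Markov inequality, using the analytic perturbation theory for the twisted operators $\cL_{tf,\wp}$ already developed in Section~\ref{sec:average}. First I would note that, by exactly the computation leading to \eqref{eq:powers} (which only uses the Lebesgue-invariance of the maps, equivalently the duality \eqref{eq:dual}, and is insensitive to whether the twist parameter is imaginary or real),
\[
\bE_{\bfP_\wp}\Bigl(e^{t\sum_{k=0}^{N-1}f_{\omega,k}}\Bigr)=\Leb\bigl(\cL_{tf,\wp}^N 1\bigr)\qquad\text{for all }t\in\bR.
\]
Since $f$ is smooth, Lemma~\ref{lem:multiplication} makes $\cL_{tf,\wp}$ a bounded operator on $\cB^{p,q}$ depending analytically on $t$, so the spectral decomposition recorded before \eqref{eq:charfn} applies for $|t|\le t_0$: $\cL_{tf,\wp}=\mu_t\cQ_t+\cR_t$ with $\cQ_t=\phi_t\otimes m_t$ rank one, $\Leb(\phi_t)=1$, $|m_t(1)|\le C$, $\cR_t\cQ_t=\cQ_t\cR_t=0$, and the spectral radius of $\cR_t$ at most $\rho<1$.

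The key quantitative input is the bound $|\mu_t|\le e^{a t^2}$ for $|t|\le t_0$, for a suitable constant $a>0$. This rests on the vanishing of the \emph{linear} coefficient of the expansion of $\mu_t$: by \eqref{eq:oneder}, together with $\Leb(\phi_\nu)\equiv 1$ and the normalization $\Leb(f)=0$, one gets $\mu'_0=\Leb(f)=0$, while $\mu''_0=\Var_\wp$ by Lemma~\ref{lem:variance}. Hence $\mu_t=1+\tfrac12\Var_\wp t^2+\cO(t^3)$, so after shrinking $t_0$ any $a>\tfrac12\Var_\wp$ gives $|\mu_t|\le e^{a t^2}$ (for real $t$, $\cL_{tf,\wp}$ is a positive operator, so in fact $0<\mu_t\le e^{a t^2}$). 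Plugging this into the decomposition and using $|m_t(1)|\le C$, $\|1\|_{p,q}=1$, the bound $|\Leb(\cdot)|\le 2\|\cdot\|_{p,q}$ on $\cB^{p,q}$ from the proof of Corollary~\ref{cor:trivial}, and $\|\cR_t^N\|_{\cL(\cB^{p,q})}\le C\rho^N$, I obtain
\[
\bE_{\bfP_\wp}\Bigl(e^{t\sum_{k=0}^{N-1}f_{\omega,k}}\Bigr)=\mu_t^N m_t(1)+\Leb(\cR_t^N 1)\le C\,e^{a t^2 N}+C\rho^N\qquad(|t|\le t_0).
\]

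It then remains to optimize the Markov bound. For $t\in(0,t_0]$, Markov's inequality gives $\bfP_\wp\bigl(\tfrac1N\sum_{k=0}^{N-1}f_{\omega,k}\ge L\bigr)\le e^{-tNL}\bigl(C e^{a t^2 N}+C\rho^N\bigr)$. Setting $L_0:=2a t_0$ and assuming, by shrinking it if necessary, that $L_0\le 1$, for $L\in(0,L_0)$ the choice $t:=L/(2a)$ lies in $(0,t_0]$ and makes $a t^2-tL=-L^2/(4a)$, so the first term is at most $C e^{-L^2 N/(4a)}$; the second satisfies $\rho^N e^{-tNL}\le\rho^N\le e^{-(\log\rho^{-1})L^2 N}$ because $L^2\le 1$. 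This proves the upper tail $\bfP_\wp\bigl(\tfrac1N\sum_{k=0}^{N-1}f_{\omega,k}\ge L\bigr)\le Ce^{-CL^2 N}$; the lower tail follows by applying the same argument to $-f$ (still mean zero, same variance), and summing the two estimates yields \eqref{eq:largedev}.

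There is no real obstacle here beyond correctly invoking Section~\ref{sec:average}; the only genuine constraint — and the reason $L$ must be confined to a small interval $(0,L_0)$ — is that the optimal exponential tilt $t$ must stay within the radius $t_0$ on which the perturbative expansion of $\mu_t$ is valid, i.e. $t=L/(2a)\le t_0$. It is also worth keeping in mind that the \emph{quadratic} exponent $L^2$, rather than a merely linear one, is entirely due to $\mu'_0=0$, i.e. to the mean-zero normalization $\Leb(f)=0$.
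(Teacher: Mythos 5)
Your proposal is correct and follows essentially the same route as the paper: an exponential Markov (Chernoff) bound, the identity $\bE_{\bfP_\wp}(e^{t\sum f_{\omega,k}})=\Leb(\cL_{tf,\wp}^N 1)$, and the perturbation-theory decomposition $\cL_{tf,\wp}=\mu_t\cQ_t+\cR_t$ with the quadratic exponent ultimately coming from $\mu'_0=\Leb(f)=0$ and $\mu''_0=\Var_\wp$. The only cosmetic difference is that the paper phrases the optimization over the tilt via a Legendre transform $I_C(L)=\sup_{|\nu|\le C}(L\nu-\ln\mu_\nu)$ and sets $\nu_*\approx L/\mu''_0$, whereas you bound $\mu_t\le e^{at^2}$ directly and take $t=L/(2a)$; these give the same rate up to constants, and your treatment of the remainder $\cR_t^N$ and of the two-sided tail via $\pm f$ is correct and slightly more explicit than the paper's sketch.
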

\begin{proof}
This is an averaged large deviation estimate and can be obtained exactly
as the averaged CLT was obtained. Although the idea is standard we give here a sketch of the proof.
For any random variable $Y$, for each $\beta>0$,
\[
{\bf P}_\wp(\{Y\geq L\})\leq \bE_{{\bf P}_\wp}(\Id_{\{Y\geq L\}}e^{\beta(Y-L)})\leq e^{-\beta L}\bE_{{\bf P}_\wp}(e^{\beta Y}).
\]
Moreover, ${\bf P}_\wp(\{|Y|\geq L\})={\bf P}_\wp(\{Y\geq L\})+{\bf P}_\wp(\{Y\geq -L\})$. Applying such an inequality to the present situation we have 
\[
{\bf P}_\wp(\{Y\geq L\})\leq e^{-\beta L}\Leb(\cL_{\beta N^{-1} f,\wp}^N1).
\]
We again apply perturbation theory techniques at the beginning of this section to estimate the right-hand side. Using \eqref{eq:charfn} with $\nu=\beta N^{-1}, g=f, \phi_0 = 1$, we have
\[
\Leb(\cL_{\beta N^{-1} f,\wp}^N 1) =  \Leb\left(\mu_\nu^N\cQ_\nu 1+\cR_\nu^N1\right) = e^{ N\ln(\mu_\nu)}(1+\cO(|\beta| N^{-1}))+\cO(\rho^N).
\]
If we define the Legendre transform $I_C(L)=\sup_{|\nu|\leq C} L\nu-\ln\mu_\nu$ and we call $\nu_*$ the value in which the sup is attained, then choosing $\beta=\nu_* N$  we have
\[
{\bf P}_\wp(\{Y\geq L\})\leq (1+C|\nu_*|)e^{-N \cdot I_C(L)}+ \cO(\rho^N).
\]
To compute explicitly $I_C(L)$ we expand 
\[
L\nu- \ln \mu_\nu = L\nu-\frac 12  \nu^2 \mu_0'' + \cO(\nu^3).
\]
Minimizing this quadratic expression leads to a value of $\nu_* = \frac{L}{\mu_0''}$ and gives (recalling $\mu_0''=\Sigma_\wp^2$) the estimate,
\[
{\bf P}_\wp(\{Y\geq L\})\leq 2e^{-\frac{L^2 N}{2 \Sigma_\wp^2}(1 - \epsilon)}  .
\]
provided $L \leq C \epsilon$ where $C \epsilon$ is small. 
\end{proof}

\section{Quenched CLT}\label{sec:quenched}
Now that we have the CLT in average we would like to establish it for a
large class of sequences. Let $\Var_\wp$ be the variance of the average
CLT
with respect to the Bernoulli process with parameter $\wp$. We wish to
show
that for $\bP_\wp$ almost all sequences $\omega$ we have the CLT with
variance $\Var_\wp$.

To this end we start with an $L^2$ estimate: assuming that $Y_N$ is a
sequence of random variables such that $\bar Y:=\lim_N \bE_\wp(Y_N)$
exists
and is real, we can compute
\[
\bE_\wp(|Y_N-\bar Y|^2)= \bE_\wp(|Y_N|^2)-\bar Y^2+2\bar Y\,\Re (\bar
Y-\bE_\wp(Y_N)).
\]
Thus, recalling the notation $f_{\omega, k}:=f\circ T_{\omega_k}\circ \cdots \circ
T_{\omega_1}$ and the bound \eqref{eq:clt},
\begin{equation}\label{eq:clt1}
\begin{split}
\bE_\wp\left(\left|\Leb(e^{i\frac \lambda{\sqrt N}\sum_{k=0}^{N-1}
f_{\omega,k}})-e^{-\frac
12\lambda^2\Var_\wp}\right|^2\right)&=\bE_\wp\left(|\Leb(e^{i\frac
\lambda{\sqrt N}\sum_{k=0}^{N-1} f_{\omega,k}})|^2\right)\\
&\quad- e^{-\lambda^2\Var_\wp}+\cO\left(\frac{1+|\lambda|^3}{\sqrt N}\right).
\end{split}
\end{equation}
The first term on the right-hand side can be conveniently reinterpreted by
introducing a product system. That is, consider the maps
$T_{\omega_k}\oplus T_{\omega_k}:\bT^4\to\bT^4$,
which are represented by the block matrices
$\Bigl(\begin{smallmatrix}A_{\omega_k} & 0 \\ 0 &
A_{\omega_k}\end{smallmatrix}\Bigr)\in SL(4,\bN)$. 
Clearly they are hyperbolic toral automorphisms (although of a higher dimensional torus)
which leave Lebesgue measure invariant. The stable and unstable
directions are two dimensional. In perfect analogy with the averaged
CLT one can define $f_2(x,y):=f(x)-f(y)$ and study the operator
$
\cL^{(2)}_{i\lambda N^{-\frac 12}f_2,\wp}
$
(see Section~\ref{sec:gap}). A direct computation then shows that, calling $\Leb_2$ the normalized Lebesgue
measure on $\bT^4$, 
\[
\begin{split}
\bE_\wp\left(|\Leb(e^{i\frac \lambda{\sqrt N}\sum_{k=0}^{N-1}
f_{\omega,k}})|^2\right) & = \bE_\wp\left(\Leb_2\left (e^{i\frac
\lambda{\sqrt N}\sum_{k=0}^{N-1}
f_2\circ (T_{\omega_k}\oplus T_{\omega_k})\circ \cdots \circ
(T_{\omega_1}\oplus T_{\omega_1})} \right)\right)\\
& = \Leb_2 \bigl( \bigl [\cL^{(2)}_{i\lambda N^{-\frac 12}f_2,\wp} \bigr ]^N 1 \bigr).
\end{split}
\]
By Lemma~\ref{lem:eigenvalue} and by \eqref{eq:clt1},
\[
\bE_\wp\left(\left|\Leb(e^{i\frac \lambda{\sqrt N}\sum_{k=0}^{N-1}
f_{\omega,k}})-e^{-\frac 12
\lambda^2\Var_\wp}\right|^2\right) \leq C\frac{1+|\lambda|^3}{\sqrt N}.
\] 

By Chebyshev inequality the above estimate implies
\begin{equation}\label{eq:cheb}
\bP_\wp\left(\left\{\left|\Leb(e^{i\frac \lambda{\sqrt N}\sum_{k=0}^{N-1}
f_{\omega,k}})-e^{-\frac 12 \lambda^2\Var_\wp}\right|\geq
\ve\right\}\right)\leq C\ve^{-2}\frac{1+|\lambda|^3}{\sqrt N}.
\end{equation}
One would then like to prove almost sure convergence by applying a
Borel-Cantelli argument but two problems are in the way: on the one hand
the sum over $N$ of the above bound diverges, on the other hand one wants
the limit to hold almost surely for all $\lambda$, that is one has
potentially uncountably many sets to deal with. Both problems can be dealt
with by applying Borel-Cantelli to subsequences and then showing that
controlling the limit of such sequences one controls the limit for each
$N$ and $\lambda$.
First of all, notice that 
\begin{equation}\label{eq:mean}
\left|e^{i\frac \lambda{\sqrt N}\sum_{k=0}^{N-1} f_{\omega,k}}-e^{i\frac
{\lambda_1}{\sqrt N}\sum_{k=0}^{N-1} f_{\omega,k}}\right|
\leq \frac {|\lambda-\lambda_1|}{\sqrt N}\left|\sum_{k=0}^{N-1}
f_{\omega,k}\right|.
\end{equation}

On the other hand, notice that the estimate \eqref{eq:largedev} in
Lemma~\ref{lem:averaged-LDE} also implies
\begin{equation}\label{eq:N}
\begin{split}
&\bfP_\wp\left(\left\{\left|\frac 1{\sqrt N}\sum_{k=0}^{N-1}
f_{\omega,k}-\frac 1{\sqrt{N+M}}\sum_{k=0}^{N+M-1} f_{\omega,k}\right|\geq
\ve\right\}\right)\\
&=\bfP_\wp\left(\left\{\left|\frac {\sqrt{1+MN^{-1}}-1}{\sqrt
{N+M}}\sum_{k=0}^{N+M-1} f_{\omega,k}-\frac
{1}{\sqrt{N}}\sum_{k=N}^{N+M-1} f_{\omega,k}\right|\geq
\ve\right\}\right)\\
& \leq \bfP_\wp\left(\left\{\left|\frac 1{N+M}\sum_{k=0}^{N+M-1}
f_{\omega,k}\right|\geq
\frac{\ve}{2\sqrt{N+M}[\sqrt{1+MN^{-1}}-1]}\right\}\right)\\
&\quad+
\bfP_\wp\left(\left\{\left|\frac 1{M}\sum_{k=0}^{M-1}
f_{\omega,k}\right|\geq \frac{\ve \sqrt{N}}{2M}\right\}\right)\leq
Ce^{-CNM^{-1}\ve^2}.
\end{split}
\end{equation}
Next, consider $b\in(\frac 12,1)$ and the sets
\footnote{Here $[x]$ stands for the integer closest to $x$.}
$A_k:=\{2^k+[j 2^{bk}]\}_{j\leq 2^{(1-b)k}}$,
$\Lambda_k:=\{-k+jk^{-1}\}_{j\leq k^2}$ and $B_k:=A_k\times \Lambda_k$.
For each $(N,\lambda)\in B_k$ let
$\Delta_k(N,\lambda)=\{(N_1,\lambda_1)\in\bN\times\bR\;:\; |N-N_1|\leq
2^{bk}+1,\, |\lambda-\lambda_1|\leq k^{-1}\}$.
Clearly
\[
\bigcup_{(N,\lambda)\in B_k}\Delta_k(N,\lambda) \supset
\left\{(N,\lambda)\in\bN\times\bR\;:\; 2^k\leq N\leq 2^{k+1},\,
|\lambda|\leq k\right\}=:J_k.
\]
We can then write
\[
\begin{split}
&\bP_\wp\left(\left\{\sup_{(N,\lambda)\in J_k}\left|\Leb(e^{i\frac
\lambda{\sqrt N}\sum_{l=0}^{N-1}f_{\omega,l}})-e^{-\frac 12
\lambda^2\Var_\wp}\right|
\geq 4\ve\right\}\right)\\
&\leq \sum_{(N,\lambda)\in
B_k}\bP_\wp\left(\left\{\sup_{(N_1,\lambda_1)\in
\Delta_k(N,\lambda)}\left|\Leb(e^{i\frac {\lambda_1}{\sqrt
N_1}\sum_{l=0}^{N_1-1}f_{\omega,l}})-e^{-\frac 12
\lambda_1^2\Var_\wp}\right|
\geq 4\ve\right\}\right)\\
&\leq\sum_{(N,\lambda)\in B_k} \Biggl\{
\bP_\wp\left(\left\{\left|\Leb(e^{i\frac {\lambda}{\sqrt
N}\sum_{l=0}^{N-1}f_{\omega,l}})-e^{-\frac 12
\lambda^2\Var_\wp}\right|\geq
\ve\right\}\right) \\
&\quad+\bP_\wp\left(\left\{\Leb\left(\sup_{(N_1,\lambda_1)\in
\Delta_k(N,\lambda)}\left|e^{i\frac {\lambda_1}{\sqrt
N_1}\sum_{l=0}^{N_1-1}f_{\omega,l}}-e^{i\frac {\lambda_1}{\sqrt
N}\sum_{l=0}^{N-1}f_{\omega,l}}\right|\right)\geq \ve\right\}\right)\\
&\quad+\bP_\wp\left(\left\{\Leb\left(\sup_{(N_1,\lambda_1)\in
\Delta_k(N,\lambda)}\left|e^{i\frac {\lambda_1}{\sqrt
N}\sum_{l=0}^{N-1}f_{\omega,l}}-e^{i\frac {\lambda}{\sqrt
N}\sum_{l=0}^{N-1}f_{\omega,l}}\right|\right)\geq \ve\right\}\right)
\Biggr\},
\end{split}
\]
where we have assumed $\Sigma_\wp k^{-1}\leq \ve$ in order to deal with
the
difference $e^{-\frac 12 \lambda^2\Var_\wp}-e^{-\frac 12
\lambda_1^2\Var_\wp}$.
 For each bounded function $g\geq 0$ holds
\[
{\bf P}_\wp(\{g \geq A\})=\bE_{{\bf P}_\wp}(\Id_{\{g \geq A\}})\geq \bE_{\wp}(\Leb(\Id_{\{g \geq A\}})\Id_{\{\Leb(g) \geq 2A\}}).
\]
But $\Leb(g)\leq |g|_\infty\Leb(\{g\geq A\})+A$, and $\Leb (g)\geq 2A$
implies $\Leb(\{g\geq A\})\geq A|g|_\infty^{-1}$. Thus,
\[
\bP_\wp(\{\Leb(g)\geq 2A\}) \leq A^{-1} |g|_\infty{\bf P}_\wp(\{g\geq
A\}).
\]
We can estimate the above expression by
\[
\begin{split}
&\bP_\wp\left(\left\{\sup_{(N,\lambda)\in J_k}\left|\Leb(e^{i\frac
\lambda{\sqrt N}\sum_{l=0}^{N-1}f_{\omega,l}})-e^{-\frac 12
\lambda^2\Var_\wp}\right|
\geq 4\ve\right\}\right)\\
&\leq \sum_{(N,\lambda)\in
B_k}\Biggl[\bP_\wp\left(\left\{\left|\Leb(e^{i\frac {\lambda}{\sqrt
N}\sum_{l=0}^{N-1}f_{\omega,l}})-e^{-\frac 12 \lambda^2\Var_\wp}\right|
\geq \ve\right\}\right) \\
&\quad+4\ve^{-1}{\bf P}_\wp\left(\left\{\sup_{(N_1,\lambda_1)\in
\Delta_k(N,\lambda)}\left|e^{i\frac {\lambda_1}{\sqrt
N_1}\sum_{l=0}^{N_1-1}f_{\omega,l}}-e^{i\frac {\lambda_1}{\sqrt
N}\sum_{l=0}^{N-1}f_{\omega,l}}\right|\geq \frac \ve 2\right\}\right)\\
&\quad+4\ve^{-1}{\bf P}_\wp\left(\left\{\sup_{(N_1,\lambda_1)\in
\Delta_k(N,\lambda)}\left|e^{i\frac {\lambda_1}{\sqrt
N}\sum_{l=0}^{N-1}f_{\omega,l}}-e^{i\frac {\lambda}{\sqrt
N}\sum_{l=0}^{N-1}f_{\omega,l}}\right|\geq \frac \ve 2\right\}\right)
\Biggr],
\end{split}
\]
Thus, remembering \eqref{eq:mean},
\[
\begin{split}
&\bP_\wp\left(\left\{\sup_{(N,\lambda)\in J_k}\left|\Leb(e^{i\frac
\lambda{\sqrt N}\sum_{l=0}^{N-1}f_{\omega,l}})-e^{-\frac 12
\lambda^2\Var_\wp}\right|
\geq 4\ve\right\}\right)\\
&\leq \sum_{(N,\lambda)\in
B_k}\Biggl[\bP_\wp\left(\left\{\left|\Leb(e^{i\frac {\lambda}{\sqrt
N}\sum_{l=0}^{N-1}f_{\omega,l}})-e^{-\frac 12 \lambda^2\Var_\wp}\right|
\geq \ve\right\}\right) \\
&\quad+4\ve^{-1}{\bf P}_\wp\left(\left\{\sup_{(N_1,\lambda_1)\in
\Delta_k(N,\lambda)}|\lambda_1|\,\left|\frac 1{\sqrt
N_1}\sum_{l=0}^{N_1-1}f_{\omega,l}-\frac {1}{\sqrt
N}\sum_{l=0}^{N-1}f_{\omega,l}\right|\geq \frac \ve 2\right\}\right)\\
&\quad+4\ve^{-1}{\bf P}_\wp\left(\left\{\sup_{(N_1,\lambda_1)\in
\Delta_k(N,\lambda)}\frac {|\lambda_1-\lambda|}{\sqrt
N}\left|\sum_{l=0}^{N-1}f_{\omega,l}\right|\geq \frac \ve 2\right\}\right)
\Biggr]\\
&\leq \sum_{(N,\lambda)\in
B_k}\Biggl[\bP_\wp\left(\left\{\left|\Leb(e^{i\frac {\lambda}{\sqrt
N}\sum_{l=0}^{N-1}f_{\omega,l}})-e^{-\frac 12 \lambda^2\Var_\wp}\right|
\geq \ve\right\}\right) \\
&\quad+4\ve^{-1}\sum_{|N_1-N|\leq 2^{bk}+1}{\bf
P}_\wp\left(\left\{\left|\frac
1{\sqrt N_1}\sum_{l=0}^{N_1-1}f_{\omega,l}-\frac {1}{\sqrt
N}\sum_{l=0}^{N-1}f_{\omega,l}\right|\geq \frac \ve {4k}\right\}\right)\\
&\quad+4\ve^{-1}{\bf P}_\wp\left(\left\{\frac1{\sqrt
N}\left|\sum_{l=0}^{N-1}f_{\omega,l}\right|\geq \frac {k\ve}
2\right\}\right) \Biggr].
\end{split}
\]
Then the estimates \eqref{eq:largedev}, \eqref{eq:N} and \eqref{eq:cheb}
imply, for $k\geq \Sigma_\wp\ve^{-1}$, 
\[
\begin{split}
&\bP_\wp\left(\left\{\sup_{(N,\lambda)\in J_k}\left|\Leb(e^{i\frac
\lambda{\sqrt N}\sum_{l=0}^{N-1}f_{\omega,l}})-e^{-\frac 12
\lambda^2\Var_\wp}\right|
\geq 4\ve\right\}\right)\\
&\leq C\sum_{(N,\lambda)\in
B_k}\Biggl[\ve^{-2}\frac{1+|\lambda|^3}{\sqrt N}+\ve^{-1}\sum_{|N_1-N|\leq
2^{bk}+1}e^{-C \frac{N}{|N-N_1|}\ve^2k^{-2}}+ \ve^{-1}
e^{-Ck^2\ve^2}\Biggr]\\
&\leq Ck^2 2^{(1-b)k}\ve^{-1}\left[\ve^{-1} k^32^{-\frac
k2}+2^{bk}e^{-C2^{k(1-b)}\ve^2k^{-2}}+e^{-C\ve^2 k^2}\right],
\end{split}
\]
for which it follows that the sum over $k$ is finite. By Borel-Cantelli it
follows that the above events $\left\{\sup_{(N,\lambda)\in
J_k}\left|\Leb(e^{i\frac \lambda{\sqrt
N}\sum_{l=0}^{N-1}f_{\omega,l}})-e^{-\frac 12 \lambda^2\Var_\wp}\right|
\geq 4\ve\right\}$ happen only finitely many times with probability one.
That is, for each $\ve>0$, there exists a random variable
$N_{\ve}:\Omega\to \bN\cup \{\infty\}$, $\bP_\wp$-almost surely finite,
such
that
\[
\sup_{|\lambda|\leq \log_2N}\left|\Leb(e^{i\frac \lambda{\sqrt
{N}}\sum_{k=0}^{N-1} f_{\omega,k}})-e^{-\frac 12 \lambda^2\Var_\wp}\right|
\leq \ve \quad\text{for}\quad N\geq N_{\ve}.
\]
Here we used the fact that, for each fixed $N$, $|\lambda|\leq \log_2N$
implies $(N,\lambda)\in J_{\lfloor{\log_2 N}\rfloor}$.
Let us call $\widetilde \Omega_\ve$ the bad set of sequences, involving
$N_{\ve}=\infty$. It is an increasing set with decreasing $\ve$, such that
$\bP_\wp(\bigcup_{\ve>0} \widetilde \Omega_\ve)=\lim_{\ve\downarrow 0}
\bP_\wp(\widetilde \Omega_\ve)=0$; the bad set is independent of $\ve$.

This concludes the proof and establishes the almost sure CLT where almost
sure means that, fixing any Bernoulli measure, the set of the sequences for
which we do not have CLT has
zero measure. Note, however, that
the limit (more precisely, the variance) is not constant but depends on
$\wp$. This is natural since the deterministic limits $\wp=0$ and $\wp=1$ generically have different variances and as $\wp$ varies, the variance should interpolate smoothly between these two extremal values, which indeed is confirmed by Lemma~\ref{lem:variance}.

\end{document}